\title{Combinatorics for general kinetically constrained spin models}
\author{Laure \textsc{Marêché}}
\address{Laure Mar\^ech\'e, LPSM UMR 8001, Universit\'e Paris Diderot, Sorbonne Paris Cit\'e, CNRS, 
75013 Paris, France}
\email{mareche@lpsm.paris}
\thanks{I acknowledge support of the ERC Starting Grant 680275 MALIG}
\theoremstyle{plain}
\newtheorem{theorem}{Theorem}
\newtheorem{proposition}[theorem]{Proposition}
\newtheorem{lemma}[theorem]{Lemma}
\newtheorem{remark}[theorem]{Remark}
\theoremstyle{definition}
\newtheorem{definition}[theorem]{Definition}
\begin{document}
 
\maketitle

\renewcommand{\thesubsection}{\arabic{subsection}}

\begin{center}
\begin{minipage}{0.8\textwidth}
\begin{small}
 \textbf{Abstract.} 
We study the set of possible configurations for a general kinetically constrained model (KCM), 
a non monotone version of the $\mathcal{U}$-bootstrap percolation cellular automata. We solve a combinatorial 
question that is a generalization of a problem addressed by Chung, Diaconis and Graham 
in 2001 for a specific one--dimensional KCM, the East model. Since the general 
models we consider are in any dimension and lack the oriented character of the East dynamics, 
we have to follow a completely different route than the one taken by Chung, Diaconis and Graham. 
Our combinatorial result is used by Marêché, Martinelli and Toninelli 
to complete the proof of a conjecture put forward by Morris.

\medskip

 \textbf{2010 Mathematics subject classification}: Primary 60K35, Secondary 05C75.
 
\textbf{Key words.} Kinetically constrained models, bootstrap percolation.
 \end{small}
 \end{minipage}
\end{center}

\section{Introduction}

In this article, we study a generalization of a combinatorial problem addressed by Chung, Diaconis and 
Graham in \cite{Chung_et_al_2001}, that can be formulated as follows. 
Fix $N \in \mathds{N}$ and consider that any element of $\{-N,\dots,N\}$ (we call them \emph{sites}) 
can be in state 0 or 1. The configuration of states can change with respect to the following rules: 
there cannot be two state changes at the same time, and the state of a site can change only if 
its left neighbor is in state zero. We consider that the sites outside $\{-N,\dots,N\}$ have state 0. 
One of the questions tackled in \cite{Chung_et_al_2001} is: 
if the initial configuration contains only ones in 
$\{-N,\dots,N\}$ and if there can only be $n$ zeroes in $\{-N,\dots,N\}$ at the same time, 
is it possible to place a zero at the origin with these rules ? Chung, Diaconis and Graham proved that 
it is possible if and only if $N \leq 2^n-2$: the bigger $N$ is, the bigger $n$ has to be 
(a non rigorous version of this proof was given previously by Sollich and Evans in \cite{Sollich_et_al1999}).

This problem was motivated by the study of the East model \cite{Jackle_et_al1991}, a stochastic particle 
system defined as follows: each site of $\mathds{Z}$ can be in state 0 or 1, and is updated 
(independently) at rate one by setting it to 0 with probability $q$ and to 1 with probability $1-q$, 
if and only if its left neighbor is at zero. Indeed, the above combinatorial result is one of the 
key ingredients to determine the relevant time scales for the East dynamics \cite{Aldous_et_al2002,Cancrini_et_al2008}. 
The East model belongs to a more general class of interacting particle systems, called kinetically 
constrained models (KCM), that were introduced by physicists to model the liquid/glass transition, 
an important open problem of condensed matter physics (see for example \cite{Ritort_et_al,Berthier_et_al2011} 
for reviews). In order to construct a different KCM, we use the same dynamics as for East, 
but with a different choice of the constraint that has to be satisfied to update a site. 
For example, if one allows a site to change state when its left or its right neighbor is at 0 
(this is the choice corresponding to the so-called Fredrickson-Andersen one spin facilitated model 
 (FA1f)), the behavior is entirely different: for any value of $N$, two zeroes at the same time are always enough 
 to reach the origin. Indeed, we can put the site $-N$ at 0, then put $-N+1$ at 0, then put $-N$ at 1, put $-N+2$ at 0, 
 put $-N+1$ at 1, etc. and we end up reaching the origin, using never more than two zeroes at 
 the same time.

 In this article, we study a generalization of the combinatorial problem of Chung, Diaconis and Graham in higher dimension and 
 with totally general rules. Though our motivation comes from the study of KCM, we stress that the content of this 
 paper is purely deterministic and requires no probabilistic tools. Let us give a precise definition of 
 the class of rules that we address. We set $d \in \mathds{N}^*$, $N \in \mathds{N}$; 
 any site of $\{-N,\dots,N\}^d$ can be in state 0 or 1. 
 There cannot be two state changes at the same time, and the state of a site $s$ can change 
only if there exists $X \in \mathcal{U}$ such that all the sites 
of $s+X$ are in state 0, where $\mathcal{U}=\{X_1,\dots,X_m\}$ 
with $m \in \mathds{N}^*$ and the $X_i$ are finite nonempty subsets of $\mathds{Z}^d \setminus \{0\}$ 
($\mathcal{U}$ is called an \emph{update family} and the $X_i$ are called \emph{update rules}). 
As before, the sites outside $\{-N,\dots,N\}^d$ are considered to be in state 0. 
The rules of the East model correspond to $d=1$ and $\mathcal{U} = \{\{-1\}\}$, and those of the FA1f model 
to $d=1$ and $\mathcal{U} = \{\{-1\},\{1\}\}$. If the initial configuration contains only ones in 
$\{-N,\dots,N\}^d$ and if there can only be $n$ zeroes in $\{-N,\dots,N\}^d$ at the same time, 
is it possible to place a zero at the origin ?

This generalization has become interesting in recent years. Indeed, until a few years ago, 
only specific update families had been studied in KCM. However, there recently was a breakthrough 
in the study of a monotone deterministic counterpart of KCM called bootstrap percolation. For 
any update family $\mathcal{U}$ of $\mathds{Z}^d$, the associated bootstrap 
percolation process is defined as follows: we choose a set $A \subset \mathds{Z}^d$ of sites that we consider 
as intially \emph{infected} (the equivalent of being at zero), we set $A_0=A$, 
and for any $t \in \mathds{N}^*$ we define the set $A_t$ of sites that are infected at time $t$ by 
\[
 A_{t} = A_{t-1} \cup \{s \in \mathds{Z}^d \,|\, \exists X \in \mathcal{U},s+X \subset A_{t-1}\},
\]
which means that at each time $t \in \mathds{N}^*$, the sites that were infected at time $t-1$ 
remain infected at time $t$ and a site $s$ that was not infected at 
time $t-1$ becomes infected at time $t$ if and only if 
there exists $X \in \mathcal{U}$ such that all the sites of $s+X$ are infected at time $t-1$.

The articles \cite{Bollobas_et_al2015} by Bollobás, Smith and Uzzell and \cite{Balister_et_al2016} by 
Balister, Bollobás, Przykucki, and Smith tackled general update families for the 
first time and proved a beautiful universality result. They showed that in $\mathds{Z}^2$, the update 
families can be sorted into three classes (whose definitions are too technical to be given in this 
introduction): subcritical, critical and supercritical, which have different behaviors that we are going to describe. 
The first natural question for a bootstrap percolation model is: if we start the process 
with each site having probability $q$ to be infected, independently of the others, will the process infect the origin 
with probability 1 or is there a positive probability that the origin is never infected even if we 
wait for an infinite time ? Moreover, 
what will be the scale of the first time at which the origin is infected (often called \emph{infection time}) ?
Since bootstrap percolation is monotonic (the more infection we have at the beginning, the more we 
will have at any stage), it can be seen that there exists a critical probability $q_c \in [0,1]$ such that 
if $q < q_c$, the origin is never infected with positive probability and if $q > q_c$ 
the origin is infected with probability 1. \cite{Bollobas_et_al2015,Balister_et_al2016} showed that 
when $\mathcal{U}$ is subcritical, $q_c > 0$, and when $\mathcal{U}$ is critical or supercritical, $q_c=0$. 
Moreover, they proved that when $q$ tends to zero, the infection time scales as $1/q^{\Theta(1)}$ 
 when $\mathcal{U}$ is supercritical and as $\exp(1/q^{\Theta(1)})$ when $\mathcal{U}$ is critical 
 (the latter result was later refined by Bollobás, Duminil-Copin, Morris, and Smith in \cite{Bollobas_et_al2017}).
 
 These results call for the study of KCM with general update families. 
 As in bootstrap percolation, a key quantity for the study of KCM is the first time 
 at which the origin is at zero when the process starts with all sites independently at zero with probability $q$; 
 we denote its mean by $\tau(q)$. Understanding the divergence of $\tau(q)$ 
 when $q$ tends to $q_c$ is particularly relevant, because the critical regime $q \downarrow q_c$ is the most interesting 
 for physicists. An easy result proven by Martinelli and Toninelli in \cite{Martinelli_et_al2016} shows that 
 the infection time in the bootstrap percolation process is a lower bound for $\tau(q)$. 
 However, this lower bound does not always give the actual behavior. 
 Indeed, for the East model, the infection time in the bootstrap 
 percolation scales as $1/q^{\Theta(1)}$ when $q$ tends to 0, but 
 the results of Aldous and Diaconis \cite{Aldous_et_al2002} and Cancrini, Martinelli, Roberto and Toninelli 
 \cite{Cancrini_et_al2008} proved that $\tau(q)$ scales as $\exp(\Theta(\log(1/q)^2))$ 
 when $q$ tends to 0. This lead Morris to formulate conjectures on the scaling of $\tau(q)$
 when $q$ tends to zero for critical and supercritical update families. His conjecture for supercritical update families 
 (conjecture 2.7 of \cite{Morris_preprint}) is that they should be divided in two subclasses: 
 supercritical \emph{unrooted} update families for which $\tau(q)$ has the same scaling as the bootstrap percolation 
 infection time, that is $1/q^{\Theta(1)}$, and supercritical \emph{rooted} update families 
 for which $\tau(q)$ has the same scaling as the East model, $\exp(\Theta(\log(1/q)^2))$. 
 Part of this conjecture was proven: 
 the lower bound for supercritical unrooted update families is given by 
 the bootstrap percolation lower bound of \cite{Martinelli_et_al2016}, and the upper bound 
 for supercritical update families both unrooted and rooted was proven by Martinelli, Morris and Toninelli 
 in \cite{MMT}. However, the lower bound for supercritical rooted update families was still missing. 
 Since a lower bound matching this behavior for the East model was proven in \cite{Cancrini_et_al2010} 
using the combinatorial result of \cite{Chung_et_al_2001}, 
we seeked to generalize this combinatorial result to all supercritical rooted update families.
 
 Indeed, we establish the following result (theorem \ref{key_thm_simple}): 
 if $\mathcal{U}$ is a supercritical rooted update family, if 
 we start with all the sites of $\{-N,\dots,N\}^2$ at state 1 and if we allow only $n$ zeroes at the same time 
 in $\{-N,\dots,N\}^2$, then to be able to put a zero at the origin, it is necessary to have $N = O(n 2^n)$. 
 This result is almost optimal, since \cite{Chung_et_al_2001} proved that for the East model, which is 
 supercritical rooted, $N = 2^n-2$ allows to put a zero at the origin. 
 Actually, our result is valid in an even larger class, namely for all update families that are not supercritical 
 unrooted. Furthermore, in proposition \ref{prop_unrooted} we also explain why our hypothesis is not restrictive, namely 
 why such a result is not valid for supercritical unrooted update families. 
Our result allows us to complete the proof of the conjecture of Morris (with respect to $\tau(q)$), which we do in 
theorem 4.2 of \cite{lbounds_infection_time} with Martinelli and Toninelli. Our result proves even more, 
since it is valid in any dimension for a natural generalization of the 
definition of supercritical unrooted update families. 

Though we generalize the result of \cite{Chung_et_al_2001}, our proof is completely 
different from theirs, as the proof of \cite{Chung_et_al_2001} relies heavily on the orientation 
of the East model and the general update families completely lack orientation. 
Note that even in dimension 1, it is a substantial generalization of the result of \cite{Chung_et_al_2001}, 
 because it applies to a whole class of update families instead of just the East model. 
 
 We begin this article by giving the notations and stating the results, then we detail the 
proof of the result for one-dimensional supercritical rooted update families, 
then we explain how this proof extends to general dimension, and finally we examine the 
supercritical unrooted case.

\section{Notations and result}\label{notations}

We fix $d \in \mathds{N}^*$ and set an update family $\mathcal{U}=\{X_1,\dots,X_m\}$ 
with the $X_i$ finite nonempty subsets of $\mathds{Z}^d\setminus \{0\}$.
Set $\Lambda \subset \mathds{Z}^d$. We consider the configurations of states in $\Lambda$; 
they belong to the set $\{0,1\}^\Lambda$. We denote by 
$1_\Lambda$ the configuration which contains only ones in $\Lambda$, 
and by $0_\Lambda$ (or just 0) the configuration which contains only zeroes in $\Lambda$. 
Furthermore, for all $\eta \in\{0,1\}^\Lambda$, $s \in \Lambda$, we 
use the notation $\eta^s$ for the configuration in $\{0,1\}^\Lambda$ 
that is $\eta$ apart from the state of $s$ that is flipped: 
$(\eta^s)_{s'} = 1 - \eta_s$ if $s' = s$ and $\eta_{s'}$ if $s' \neq s$. 
Moreover, if $\Lambda' \subset \Lambda$ and $\eta \in\{0,1\}^\Lambda$, we denote by 
$\eta_{\Lambda'}$ its restriction to $\Lambda'$.
In addition, if $\Lambda' \subset \mathds{Z}^d$ is disjoint from $\Lambda$, for all 
$\eta \in \{0,1\}^\Lambda$, $\eta' \in \{0,1\}^{\Lambda'}$, we denote by 
$\eta_\Lambda\eta'_{\Lambda'}$ the configuration on 
$\Lambda \cup \Lambda'$ defined by $(\eta_\Lambda\eta'_{\Lambda'})_s = \eta_s$ if $s \in \Lambda$ and 
$(\eta_\Lambda\eta'_{\Lambda'})_s = \eta'_s$ if $s \in \Lambda'$. \\
We say that a move from $\eta \in \{0,1\}^\Lambda$ to $\eta' \in \{0,1\}^\Lambda$ is \emph{legal} 
if $\eta' = \eta$, or if $\eta' = \eta^s$ with $s \in \Lambda$ and there 
exists an update rule $X\in \mathcal{U}$ such that $(\eta_\Lambda 0_{\Lambda^c})_{s + X} = 0_{s + X}$ 
(we may also write $(\eta_\Lambda)_{s + X} = 0$ to simplify the notation); 
that is, a move is legal if it respects the rules described in the introduction, assuming that all sites 
outside of $\Lambda$ are zeroes.

\begin{definition}
If $\eta, \eta' \in \{0,1\}^\Lambda$, a 
\emph{legal path} from $\eta$ to $\eta'$ is a sequence of configurations $(\eta^j)_{0 \leq j \leq m}$ 
such that $m \in \mathds{N}^*$, $\eta^0 = \eta$, $\eta^m = \eta'$, 
and for all $j \in \{0, \dots,m-1\}$, the move 
from $\eta^j$ to $\eta^{j+1}$ is legal. \\
 For any $n \in \mathds{N}$, we say that $(\eta^j)_{0 \leq j \leq m}$ 
 is an \emph{$n$-legal path} if for all $j \in \{0,\dots,m\}$, $\eta^j$ does not 
contain more than $n$ zeroes in $\Lambda$.
\end{definition}

In order to have lighter notation, we use the same notation $\eta^j$ for the $j$-th step of a path and for 
the configuration that is equal to $\eta$ everywhere except at site $j$. In order to avoid confusion, 
$\eta^0$, $\eta^j$, $\eta^{j+1}$ and $\eta^m$ will always denote a step of a path, and 
no other index will be used to describe a step of a path.

For all $n \in \mathds{N}$, we define 
\[ 
 V(n,\Lambda) = \{\eta \in \{0,1\}^\Lambda \,|\, \text{there exists an }n\text{-legal 
path from }1_\Lambda\text{ to }\eta\}.
\]
$V(n,\Lambda)$ is the set of configurations 
of $\{0,1\}^\Lambda$ that are attainable from the configuration containing only ones using at 
most $n$ zeroes. $V(n,\Lambda)$ will be very different depending on the properties of $\mathcal{U}$. 
In this article, we will distinguish between two classes of update families. To define them, we recall the concept 
of stable direction introduced in \cite{Bollobas_et_al2015}:

\begin{definition}
For any $u \in S^{d-1}$, let $\mathds{H}_u = \{x \in \mathds{R}^d \,|\, \langle x,u \rangle < 0 \}$ 
the half-space with boundary orthogonal to $u$. We say that $u$ is a \emph{stable direction} 
for the update family $\mathcal{U}$ when there does not exist $X \in \mathcal{U}$ such that 
$X \subset \mathds{H}_u$. 
\end{definition}

This implies in particular that if we apply the rules in $\mathds{Z}^d$ with the update family $\mathcal{U}$, 
and if we start with only ones in $(\mathds{H}_u)^c$, then no zero can appear in $(\mathds{H}_u)^c$. 
Intuitively, it means that the zeroes cannot move towards direction $u$.
The following definition is an extension to the dimension $d$ of the definition proposed in 
\cite{Morris_preprint}:

\begin{definition}\label{def_classification}
We say that $\mathcal{U}$ is \emph{supercritical unrooted} if there exists a hyperplane of $\mathds{R}^d$ 
that contains all stable directions of $\mathcal{U}$.
\end{definition}

An example of supercritical unrooted update family is the one corresponding to 
the Fredrickson-Andersen one spin facilitated model, 
whose one-dimensional version was presented in the introduction, for which 
$\mathcal{U} = \{\{e_1\},\dots,\{e_d\},\{-e_1\},\dots,\{-e_d\}\}$ where $\{e_1,\dots,e_d\}$ 
is the canonical basis of $\mathds{R}^d$. This update family has no stable directions at all.

We are now ready to state our main result, theorem \ref{key_thm_simple}, which 
is valid for all update families that \emph{are not} supercritical unrooted. This actually
covers many different behaviors; in particular, in two dimensions, 
according to the classification in \cite{Bollobas_et_al2015} 
they include: supercritical update families which have two non opposite stable directions (called supercritical rooted 
in \cite{Morris_preprint}), critical and subcritical update families.

\begin{theorem}\label{key_thm_simple}
 Let $\mathcal{U}$ be any update family that is not supercritical unrooted. There exists a constant 
 $\kappa > 0$ such that for any $n \in \mathds{N}$, every 
 $\eta \in V(n,\{-\lfloor\kappa n 2^n \rfloor,\dots,\lfloor\kappa n 2^n \rfloor\}^d)$ satisfies $\eta_0 = 1$.
\end{theorem}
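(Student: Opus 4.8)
The plan is to exploit the fact that a non-supercritical-unrooted update family cannot have all its stable directions inside one hyperplane. Concretely, for $d=2$, either $\mathcal{U}$ has two non-opposite stable directions $u_1, u_2$ (the supercritical rooted case), or it has stable directions spanning all of $S^1$ in the sense that they are not contained in a line through the origin, or it is critical/subcritical (in which case there is a whole open arc of stable directions, even stronger). In every case one can extract two stable directions $u_1, u_2$ that are \emph{linearly independent} and such that the cone they ``point into'' is proper. I would first reduce to the two-dimensional case: by intersecting $\mathcal{U}$ with a suitable two-dimensional sublattice (or projecting), the hypothesis ``not supercritical unrooted'' passes to a two-dimensional family that is also not supercritical unrooted, and a zero at the origin in dimension $d$ forces a zero at the origin in the restricted two-dimensional problem, so it suffices to prove the $d=2$ statement. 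The constant $\kappa$ will then depend only on $\mathcal{U}$ through the geometry of these two stable directions (and the diameter of the update rules).

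The core of the argument, in $d=2$, is a potential/weight function. Using a stable direction $u$, I would assign to each site $s$ the weight $w_u(s) = 2^{\lfloor \langle s, u\rangle \rfloor}$ or a similar exponential in the $u$-coordinate, mimicking the East-model weighting of \cite{Chung_et_al_2001}. The key estimate is that a single legal flip, which turns a $1$ into a $0$ at $s$ only when some translate $s+X \subset \mathcal{U}$ is entirely $0$, cannot increase the total weight $\sum_{s:\,\eta_s=0} w_u(s)$ by more than a bounded factor times the largest weight already present among the zeroes — because stability of $u$ means every $X \in \mathcal{U}$ has a point with nonnegative $u$-coordinate, so the new zero sits at $u$-coordinate at most (roughly) that of an existing zero, up to the bounded range of the rules. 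From this one shows inductively that after an $n$-legal path from $1_\Lambda$, any zero lies within $u$-coordinate $O(n)$ of the ``$u$-frontier'' of $\Lambda$, i.e. the zeroes cannot penetrate more than $Cn$ in the $-u$ direction. Doing this for \emph{two} linearly independent stable directions $u_1, u_2$ simultaneously confines all zeroes to a region of size $O(n)$ in two independent directions near a corner of $\Lambda$ — but the origin is at distance $\sim N$ from every such corner, so if $N$ is large compared to $n$ the origin can never be reached. The sharper $N = O(n2^n)$ bound (rather than $O(n)$) comes from the finer fact that the weight can actually grow: each zero can be ``pushed'' one step deeper at the cost of doubling, so the reachable depth is $\Theta(n)$ in a renormalized coordinate where one unit is itself $\Theta(2^n)$ fine — this is exactly the $2^n$ scaling of \cite{Chung_et_al_2001}, and the extra factor $n$ is slack we can afford.

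I expect the main obstacle to be making the weight argument work \emph{without orientation}: in the East model the constraint is a single site strictly to the left, so the weight is monotone under legal moves in a clean way, whereas here an update rule $X$ may have points on both sides of the hyperplane $\langle x, u\rangle = 0$, and only the \emph{existence} of a point with $\langle x, u\rangle \geq 0$ is guaranteed. The delicate point is to choose the right functional — likely not a pure sum of weights but a quantity like $\max\{\langle s,u\rangle : \eta_s = 0\}$ combined with a count of how many zeroes sit near that maximum — and to track how a legal flip changes it, handling the case where the flipped site is the new maximizer versus not. A second, more bookkeeping obstacle is the reduction to two dimensions and the verification that ``not supercritical unrooted'' is inherited: I would need to argue that if all stable directions of the restricted family lay in a line, one could lift this to a hyperplane containing all stable directions of $\mathcal{U}$, contradicting the hypothesis; this requires care about which two-dimensional section to take, and choosing it generically (rationally) so that the section is itself a genuine update family on $\mathbb{Z}^2$.
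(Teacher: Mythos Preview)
Your proposal contains a concrete error that undermines the whole strategy. The claim that along an $n$-legal path ``any zero lies within $u$-coordinate $O(n)$ of the $u$-frontier of $\Lambda$'' is false already for the East model: with $n$ zeroes one reaches depth exactly $2^n-1$ from the boundary, not $Cn$. The observation that stability of $u$ forces every rule $X$ to contain a point with $\langle x,u\rangle\ge 0$ only tells you that when a new zero appears at $s$, some existing zero has $u$-coordinate at least $\langle s,u\rangle$ (up to range $r$); it says nothing about how far the minimum $u$-coordinate can drop when zeroes are repeatedly created and destroyed under the budget constraint. Your sentence about ``renormalized coordinates where one unit is $\Theta(2^n)$'' gestures at the right scaling but is not a mechanism --- the exponential depth is precisely the content to be proved, and the weight $w_u(s)=2^{\lfloor\langle s,u\rangle\rfloor}$ has no monotonicity under legal moves once rules may straddle the hyperplane $\{\langle x,u\rangle=0\}$. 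This is exactly the obstacle you flag at the end and do not resolve; the paper explicitly notes that the Chung--Diaconis--Graham weight argument relies on the strict one-sidedness of East and does not extend. The reduction to two dimensions is a separate problem: restricting to a $2$-dimensional sublattice does not yield a well-defined update family (the rules use sites off the sublattice), and projection does not commute with the dynamics.

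The paper's route is entirely different. It works directly in $\mathds{Z}^d$: since the stable directions are not contained in a hyperplane, one picks $d$ linearly independent stable directions $u_1,\dots,u_d$ and passes to a dual basis $v_1,\dots,v_d$ in which each $\mathds{H}_{u_i}$ is the coordinate half-space $\{x_i>0\}$. One then defines nested parallelepipeds $\mathcal{P}_n$ (side lengths $a_n+b_n$ with $a_n=r(2^n-1)$, $b_n=rn2^{n-1}$) and proves by induction on $n$ the statement $\mathcal{H}_n$: $\eta_0=1$ for every $\eta\in V(n,\Lambda)$ with $\mathcal{P}_n\subset\Lambda$. The inductive step rests on a lemma saying that any nontrivial $\eta\in V(n,\Lambda)$ has a zero \emph{outside} $\mathcal{P}_{n-1}$, so at most $n-1$ zeroes ever lie in $\mathcal{P}_{n-1}$ and $\mathcal{H}_{n-1}$ applies there. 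That lemma is proved by a buffer-zone argument, not a weight: a shell $B$ of thickness $r$ around an inner region $D\supset\mathcal{P}_{n-1}$ is shown to stay all-ones along any $n$-legal path, because (i) for each $i$ at least one zero stays trapped in $D\setminus D_i$ (the ``leftmost in coordinate $i$'' zero cannot be erased, since no rule lies in $\mathds{H}_{u_i}$), forcing at most $n-1$ zeroes in each $D_i$ and in the exterior $C$, and (ii) by $\mathcal{H}_{n-1}$ those $n-1$ zeroes cannot reach $B$ from either side.
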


\begin{remark}
 Our theorem is stated for paths that are $n$-legal when all sites outside of 
 the box $\{-\lfloor\kappa n 2^n \rfloor,\dots,\lfloor\kappa n 2^n \rfloor\}^d$ are considered to be zeroes; 
 it actually remains valid if we consider the $n$-legal paths 
 for any configuration of the states outside of the box. Indeed, if we consider that 
 the sites outside of the box are not all zeroes, the possible moves are more restricted, hence a 
 legal path for such a configuration is also a legal path if there are zeroes outside of the box.
 \end{remark}

 The assumption that $\mathcal{U}$ is not supercritical unrooted in theorem \ref{key_thm_simple} 
 is not restrictive. Indeed, if $\mathcal{U}$ is supercritical unrooted, the behavior 
 is different: 
 
 \begin{proposition}\label{prop_unrooted}
  If $d=1$ or $2$, and if $\mathcal{U}$ is supercritical unrooted, there exists $n \in \mathds{N}^*$ 
  such that for any domain $\Lambda \subset \mathds{Z}^d$ containing the origin, 
  there exists $\eta \in V(n,\Lambda)$ such that $\eta_0 = 0$.
 \end{proposition}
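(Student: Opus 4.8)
The plan is to construct, for any (finite) domain $\Lambda\ni 0$, an explicit $n$-legal path from $1_\Lambda$ to a configuration with a zero at the origin, where $n$ depends only on $\mathcal{U}$. The mechanism is to nucleate a small \emph{droplet} of zeroes near $\Lambda^c$ — where all sites are zero by convention, so nucleation is cheap — and then transport it, one lattice step at a time, until it covers the origin. Throughout, the number of zeroes inside $\Lambda$ stays bounded by a constant because the droplet has bounded size.

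Set $R=\max_{1\le i\le m}\max_{x\in X_i}\|x\|_\infty$, and in dimension $2$ let $\ell$ be a line through the origin of $\mathds{R}^2$ containing every stable direction of $\mathcal{U}$ (such a line exists precisely because $\mathcal{U}$ is supercritical unrooted; in dimension $1$ the role of $\ell$ is played by $\{0\}$, so that \emph{no} direction is stable). The key point is that for every unit vector $w\notin\ell$, $w$ is not stable, hence there is an update rule $X_w\in\mathcal{U}$ with $X_w\subset\mathds{H}_w$, i.e. $\langle x,w\rangle<0$ for all $x\in X_w$. Using $X_w$ one can \emph{grow} a droplet of zeroes by one layer in the direction $w$: to flip a site $s$ just beyond the $w$-facing boundary of the droplet the constraint is that $s+X_w$ be all zero, and since every point of $s+X_w$ lies strictly on the $-w$ side of $s$ (and within $\ell_\infty$-distance $R$), it is already zero as soon as the droplet is thick enough (of order $R$) near $s$. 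Symmetrically, since $-w\notin\ell$ as well, one can \emph{erase} the layer on the $-w$ side using a rule $X_{-w}\subset\mathds{H}_{-w}$. Chaining a growth move on the $w$-side with erasures on the $-w$-side translates the droplet by a lattice vector with positive $w$-component, using only a bounded number of extra zeroes at any time. In dimension $1$ this is exactly the back-and-forth argument described in the introduction for the FA1f chain, the droplet being an interval of $R$ consecutive zeroes, grown from $\min\Lambda-1\in\Lambda^c$ and walked to the origin with never more than $R+1$ zeroes in $\Lambda$.

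In dimension $2$ the new feature is that one must prevent the droplet from collapsing: because $X_w$ may be an arbitrary finite subset of $\mathds{H}_w$, growing along $w$ costs the droplet about $2R$ of its extent transverse to $w$, so one periodically re-widens it by growing along a transverse direction. One therefore fixes a unit vector $w_1\notin\ell$ with $w_1^\perp\notin\ell$ as well (possible since $\ell$ and $\ell^\perp$ meet the unit circle in only four points in all), so that all of $\pm w_1,\pm w_1^\perp$ are unstable, and maintains a droplet — say a rectangle with sides along $w_1,w_1^\perp$ — whose side lengths stay in a fixed range like $[10R,100R]$, alternating pushes along $\pm w_1$ and re-widenings along $\pm w_1^\perp$ so that each operation makes net progress in the chosen direction while the size stays in range; this is a careful but routine choice of the order in which sites are flipped. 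Since $\Lambda$ is finite one first picks $p\in\Lambda^c$ far in the direction $w_1$, so that a whole launch region of the required size lies in $\Lambda^c$, and grows a droplet there at no cost in zeroes of $\Lambda$ until it straddles $\partial\Lambda$, spending at most $O(R^2)$ zeroes of $\Lambda$. Finally, as $\{\pm w_1,\pm w_1^\perp\}$ positively span $\mathds{R}^2$, one routes the droplet from $\partial\Lambda$ to the origin by a finite sequence of translations (the droplet may freely pass through $\Lambda^c$, whose sites are zero and do not count), reaching a configuration with $\eta_0=0$; the number of zeroes inside $\Lambda$ never exceeds a constant $n=n(\mathcal{U})$ of order $R^2$, which is the statement.

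The main obstacle is precisely the two-dimensional droplet-transport step: one must pin down a droplet shape together with an explicit order of single-site flips so that at every move the $X$-translate demanded by the constraint already lies in the current set of zeroes, and so that one full ``push plus re-widen'' cycle returns a droplet of the same shape, merely shifted. This amounts to showing that a sufficiently large droplet of zeroes can be grown in \emph{every} direction of $S^{1}\setminus\ell$, hence transported in any of $\pm w_1,\pm w_1^\perp$; once this is established, nucleation near $\Lambda^c$ and routing to the origin are straightforward bookkeeping. The hypothesis $d\le 2$ enters only to keep this geometry transparent — the stable directions lying in a hyperplane still leaves a transverse pair of unstable directions for the re-widening, which is all the construction needs.
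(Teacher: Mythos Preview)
Your $d=1$ argument coincides with the paper's. For $d=2$, both you and the paper pursue the same high-level strategy --- nucleate a finite droplet of zeroes near $\Lambda^c$ and transport it to the origin while keeping its size bounded --- but the droplet constructions differ in a way that matters.

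The paper does \emph{not} use an axis-aligned rectangle grown with four individual rules $X_{\pm w_1}, X_{\pm w_1^\perp}$. Instead it invokes the droplet of Bollob\'as--Smith--Uzzell (section~5 and lemma~5.5 of \cite{Bollobas_et_al2015}): given an entire open semicircle of unstable directions centred at $u$, one can build a polygon whose edges are perpendicular to carefully chosen quasi-stable directions, and which provably grows by one full layer in direction $u$ while \emph{retaining its shape}. Since supercritical unrooted gives two opposite unstable semicircles, the paper glues the two BSU droplets for $u$ and $-u$ into a combined droplet that can grow in both directions; growing on one side and reversing growth on the other yields a rigid translation along the line $\mathds{R}u$. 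Bringing this droplet in from $\Lambda^c$ along that line finishes the argument.

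Your rectangle scheme, by contrast, relies on just four unstable directions and explicitly loses shape at each push (the transverse shrinkage you note), to be repaired by a ``re-widen'' step. The sentence ``this is a careful but routine choice of the order in which sites are flipped'' is exactly where the difficulty hides: for a general $\mathcal{U}$, the single rule $X_{w_1}$ may shift each new column laterally rather than merely shorten it, and the re-widening rule $X_{w_1^\perp}$ may in turn demand zeroes in columns you have not yet filled (or have already erased). Showing that a full grow/re-widen/shrink cycle returns a translate of the same rectangle, for \emph{every} supercritical unrooted $\mathcal{U}$, is a nontrivial combinatorial statement --- it is essentially what the BSU lemma accomplishes for its polygonal droplet using the whole semicircle rather than four directions. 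The paper sidesteps this by citing that lemma; you assert it without proof. So your sketch is at the right level of ambition, but the load-bearing step is deferred to a claim that is neither obvious nor, as far as I know, available in the literature in your rectangular formulation.
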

 
 Proposition \ref{prop_unrooted} means that there exists a finite $n$ such that $n$ zeroes are always enough
 to bring a zero to the origin. We expect this result to hold also for $d \geq 3$. A sketch of proof 
 can be found in section \ref{section_unrooted}. 

\section{The one-dimensional case}

Let $\mathcal{U}$ be a one-dimensional, non supercritical unrooted update 
family. Then $\mathcal{U}$ has at least one stable 
direction, which can be 1 or -1. Without loss of generality, we may suppose that -1 is a stable direction.
We denote $r$ the range of the interactions: 
$r = \max \{ \|x-y\|_\infty \,|\, x,y \in X \cup \{0\}, X \in \mathcal{U}\}$. Moreover, for all $n \in \mathds{N}$, 
we write $a_n = r(2^n-1)$, $b_n = rn2^{n-1}$ and $\mathcal{P}_n = \{-a_n,\dots,b_n\}$.

We will prove theorem \ref{key_thm_simple} by induction. For all $n \in \mathds{N}$, we denote 
\[
 \mathcal{H}_n = ``  \text{for any }\Lambda \subset \mathds{Z} \text{ such that }
 \mathcal{P}_n \subset \Lambda, \text{ for any }\eta \in V(n,\Lambda), \eta_0=1\text{''}.
\]
 Proving $\mathcal{H}_n$ for all $n \in \mathds{N}$ will prove the theorem in the one-dimensional case. 
 
 In order to do that, we will need the 
 
 \begin{lemma} \label{key_lemma_dim1}
  Let $n \geq 1$ and suppose $\mathcal{H}_{n-1}$. Then, for all $\Lambda \subset \mathds{Z}$ such that 
  $\mathcal{P}_n \subset \Lambda$, for all $\eta \in 
  V(n,\Lambda) \setminus \{1_{\Lambda}\}$, $\eta$ has at least one zero in 
  $\Lambda \setminus \mathcal{P}_{n-1}$.
 \end{lemma}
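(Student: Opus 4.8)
The plan is to prove Lemma \ref{key_lemma_dim1} by a contradiction argument exploiting the stability of the direction $-1$ together with the induction hypothesis $\mathcal{H}_{n-1}$. Suppose $\eta \in V(n,\Lambda) \setminus \{1_\Lambda\}$ has all its zeroes inside $\mathcal{P}_{n-1} = \{-a_{n-1},\dots,b_{n-1}\}$. Fix an $n$-legal path $(\eta^j)_{0 \le j \le m}$ from $1_\Lambda$ to $\eta$, and let $j_0$ be the first index at which $\eta^{j_0}$ has a zero somewhere to the \emph{left} of $\mathcal{P}_{n-1}$, i.e. in $\{-a_n,\dots,-a_{n-1}-1\} \cap \Lambda$ — such an index need not exist, and the easy case is when it does not, because then the whole path stays inside the region $\Lambda \setminus \{-a_n,\dots,-a_{n-1}-1\}$, which one compares with a shifted copy of $\mathcal{P}_{n-1}$. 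The heart of the matter is to show that $j_0$ cannot exist at all, or more precisely to derive a contradiction from its existence.

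The key mechanism is the following: I would track the position $p_j$ of the leftmost zero of $\eta^j$ (set $p_j = +\infty$ if there is none). Since $-1$ is a stable direction, the leftmost zero cannot move left by more than a bounded amount in a single legal move; more carefully, whenever a new zero is created at some site $s$, there must be an update rule $X \in \mathcal{U}$ with $(\eta^j)_{s+X} = 0$, and because no $X \in \mathcal{U}$ is contained in $\mathds{H}_{-1} = \{x < 0\}$, the set $s+X$ must contain a site $\ge s$; hence the new zero at $s$ sits within distance $r$ of an already-existing zero lying at or to the right of $s$. This gives a "leftward progression costs a zero" phenomenon: to push the leftmost zero from near the origin out to $-a_n$, the path must at various times maintain a whole chain of zeroes spaced at most $r$ apart spanning roughly the distance from $b_{n-1}$ (or the origin) back to $-a_n$. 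The quantitative bookkeeping is where the constants $a_n = r(2^n-1)$ and $b_n = rn2^{n-1}$ are calibrated: I expect one shows that immediately before time $j_0$ the configuration $\eta^{j_0-1}$, restricted to a suitable sub-box lying strictly inside $\mathcal{P}_{n-1}$ (hence covered by the induction hypothesis applied to a translate of $\mathcal{P}_{n-1}$, after reflecting so that the stable direction points the right way), must have a zero — contradicting $\mathcal{H}_{n-1}$ either directly or after shifting coordinates so that the relevant point plays the role of the origin.

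Concretely, the steps in order: (1) set up the contradiction hypothesis and fix an $n$-legal path; (2) establish the local lemma that a newly created zero is within $\ell^\infty$-distance $r$ of a pre-existing zero situated at a coordinate $\ge$ its own, using stability of $-1$; (3) iterate (2) to show that if at some time there is a zero at position $\le -a_{n-1}-1$ then at that time (or just before) there must simultaneously be a zero in a window of length about $a_{n-1}$ located near position $-a_{n-1}$, i.e. inside $\mathcal{P}_{n-1}$ shifted; (4) apply $\mathcal{H}_{n-1}$ to this shifted window — which contains a translate of $\mathcal{P}_{n-1}$, using that $a_{n-1}+1 + (\text{needed slack}) \le a_n$ and similarly for the $b$-side — to conclude that this sub-configuration must be all ones if only $n-1$ zeroes were used there, then argue that the budget of $n$ zeroes minus the one "spent" far to the left leaves at most $n-1$ zeroes in that window, yielding the contradiction.

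The main obstacle, I expect, is step (3)–(4): making precise the claim that "spending a zero far to the left forces the rest of the configuration to have been reachable with one fewer zero in a box that still contains a translate of $\mathcal{P}_{n-1}$." This requires carefully choosing the translate so that the stable direction $-1$ still points outward, tracking that the number of zeroes strictly inside that translate is at most $n-1$ at the critical time, and checking the arithmetic $a_n \ge 2 a_{n-1} + r$, $b_n \ge b_{n-1} + a_{n-1} + r$ (or whatever the exact inequalities turn out to be) so that the translate genuinely fits inside $\Lambda$ with room for the "lever arm" of zeroes connecting it to the far-left zero. Everything else — the local stability lemma and the final counting — should be routine once these geometric constants are pinned down.
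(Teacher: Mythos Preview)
Your proposal contains a sign error and, more seriously, a structural gap.

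First the sign: $\mathds{H}_{-1} = \{x \in \mathds{R} : \langle x,-1\rangle < 0\} = \{x>0\}$, not $\{x<0\}$. So ``$-1$ stable'' means no rule lies in $\mathds{N}^*$; every $X \in \mathcal{U}$ contains a \emph{negative} element. Consequently a newly created zero at $s$ always has a pre-existing zero (or a site of $\Lambda^c$) strictly to its \emph{left} within distance $r$, not to its right.

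With the sign corrected, your ``easy case'' is not easy: it is essentially the whole lemma. Nothing in your outline rules out a forward path that creates zeroes somewhere outside $\mathcal{P}_{n-1}$, pushes them into $\mathcal{P}_{n-1}$, and then erases the surplus outside (erasing a zero at $s$ is legal as soon as there is a zero to the left of $s$, which the zeroes now sitting in $\mathcal{P}_{n-1}$ can supply). ``Comparing with a shifted copy of $\mathcal{P}_{n-1}$'' does not yield a contradiction, because to invoke $\mathcal{H}_{n-1}$ on a shifted box you must know that the path restricted to that box is $(n-1)$-legal, i.e.\ that a zero is pinned \emph{outside} it at every step --- and you have not produced one. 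Your ``hard case'' analysis (a chain of zeroes reaching far left) likewise does not close: once the sign is right, the leftmost zero simply cannot drift left inside $\mathcal{P}_n$, so $j_0$, if it exists at all, merely records a zero appearing within $r$ of the left edge of $\mathcal{P}_n$, which is perfectly compatible with later having all zeroes in $\mathcal{P}_{n-1}$.

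The paper's proof supplies the missing idea: reverse time. By reversibility of legal moves it suffices to show there is no $n$-legal path from $\eta$ to $1_\Lambda$. Starting from $\eta$, one already has a zero in $\mathcal{P}_{n-1}$ and a width-$r$ buffer $B$ (on each side of a box $D \supset \mathcal{P}_{n-1}$) that is all ones. One then maintains four invariants along the path: $\mathcal{P}_{n-1}$ always contains a zero; $B$ stays all ones; the outside region $C$ padded by ones lies in $V(n-1,\Lambda)$; and the strip $D_1 = D \setminus \mathcal{P}_{n-1}$ padded by ones lies in $V(n-1,D_1')$ for a translate $D_1'$ of $\mathcal{P}_{n-1}$. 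The first invariant uses stability of $-1$ exactly as in your step~(2), with the correct sign: the leftmost zero in $\mathcal{P}_{n-1}$ cannot be erased because its left witness would have to sit in $B$. The second invariant is where $\mathcal{H}_{n-1}$ is spent: a zero appearing in $B$ would, via the third or fourth invariant, place a zero at the centre of a translate of $\mathcal{P}_{n-1}$ inside some $V(n-1,\cdot)$, contradicting $\mathcal{H}_{n-1}$. The identities $a_n = 2a_{n-1}+r$ and $b_n = b_{n-1}+a_{n-1}+r$ that you guessed are precisely what makes $D_1'$ a translate of $\mathcal{P}_{n-1}$ fitting inside $\mathcal{P}_n$. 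Since a zero persists in $\mathcal{P}_{n-1}$ throughout, $1_\Lambda$ is never reached.
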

 
 This lemma means that if $\mathcal{H}_{n-1}$ holds, in a large enough interval, any configuration attainable 
 using no more than $n$ zeroes must have one of its zeroes outside of $\mathcal{P}_{n-1}$ (except the configuration 
containing only ones, that has no zero at all). This implies that there are 
at most $n-1$ zeroes in $\mathcal{P}_{n-1}$, which will allow us to use $\mathcal{H}_{n-1}$ to prove that 
the origin cannot be reached by zeroes (see figure \ref{figure_thm_dim1}). 

\medskip

We first prove the theorem supposing lemma \ref{key_lemma_dim1} holds; 
we will prove the lemma afterwards. As we announced, 
we will show by induction that $\mathcal{H}_n$ holds for any $n \in \mathds{N}$.
 
 \emph{Case n=0.}
 This is a simple case: if $\Lambda \subset \mathds{Z}$, $\mathcal{P}_0 \subset \Lambda$ 
 and $\eta \in V(0,\Lambda)$, then $\eta$ contains no zero.

\emph{Induction.} Let $n \geq 1$. We suppose $\mathcal{H}_{n-1}$. Let us show $\mathcal{H}_n$.
Let $\Lambda \subset \mathds{Z}$ such that $\mathcal{P}_n \subset \Lambda$, and $\eta \in V(n,\Lambda)$.

\begin{figure}
 \begin{center}
  \begin{tikzpicture}[scale=0.15]
\draw (-26,0)--(35,0);
\draw[thick] (-26,0.5)--(-26,-0.5);
\draw[thick] (-19,0.5)--(-19,-0.5);
\draw[thick] (-8,0.5)--(-8,-0.5);
\draw (0,0.5)--(0,-0.5);
\draw[thick] (10,0.5)--(10,-0.5);
\draw[thick] (31,0.5)--(31,-0.5);
\draw[thick] (35,0.5)--(35,-0.5);
\draw (0,0) node[below]{0};
\draw (-29,0) node{$\Lambda$};
\draw[decorate,decoration={brace}] (-8,0.7)--(10,0.7) node[midway,above]{$\mathcal{P}_{n-1}$};
\draw[decorate,decoration={brace}] (-19,4.6)--(31,4.6) node[midway,above]{$\mathcal{P}_n$};
\draw[dotted] (-19,0)--(-19,4.6);
\draw[dotted] (31,0)--(31,4.6);
 \end{tikzpicture}
  \caption{Proof of the theorem in the one-dimensional case: 
  there must be a zero in $\Lambda \setminus \mathcal{P}_{n-1}$, 
  hence there can be at most $n-1$ zeroes in  
  $\mathcal{P}_{n-1}$. Thus $\mathcal{H}_{n-1}$ implies that there is no zero at 0.}
  \label{figure_thm_dim1}
 \end{center}
\end{figure}
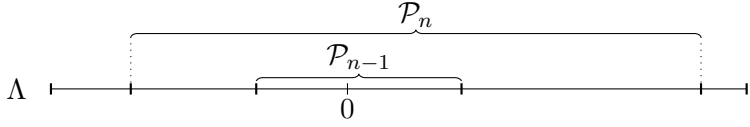

By definition, there exists an $n$-legal path $(\eta^j)_{0 \leq j \leq m}$ from $1_\Lambda$ to $\eta$. 
We will prove that $(\eta_{\mathcal{P}_{n-1}}^j)_{0 \leq j \leq m}$ is an $(n-1)$-legal 
path from $\eta_{\mathcal{P}_{n-1}}^0 = 1_{\mathcal{P}_{n-1}}$ 
to $\eta_{\mathcal{P}_{n-1}}^m = \eta_{\mathcal{P}_{n-1}}$.

Firstly, for all $j \in \{0,\dots,m-1\}$, the move from $\eta_{\mathcal{P}_{n-1}}^j$ to 
$\eta_{\mathcal{P}_{n-1}}^{j+1}$ is legal. Indeed, if $\eta^{j+1} = \eta^j$ or if $\eta^{j+1} = (\eta^j)^z$ 
with $z \in \Lambda \setminus \mathcal{P}_{n-1}$, 
 $\eta_{\mathcal{P}_{n-1}}^{j+1} = \eta_{\mathcal{P}_{n-1}}^j$ and the move from 
 $\eta_{\mathcal{P}_{n-1}}^{j+1}$ to $\eta_{\mathcal{P}_{n-1}}^j$ is legal.
 Furthermore, if $\eta^{j+1} = (\eta^j)^z$ with $z \in \mathcal{P}_{n-1}$, 
 $\eta_{\mathcal{P}_{n-1}}^{j+1} = (\eta_{\mathcal{P}_{n-1}}^j)^z$, and 
 since the move from $\eta^j$ to $\eta^{j+1}$ is legal, 
 there exists $X \in \mathcal{U}$ such that $(\eta_\Lambda^j 0_{\Lambda^c})_{z+X} = 0$, which implies 
 $(\eta_{\mathcal{P}_{n-1}}^j 0_{(\mathcal{P}_{n-1})^c})_{z+X} = 0$, hence 
 the move from $\eta_{\mathcal{P}_{n-1}}^j$ to $\eta_{\mathcal{P}_{n-1}}^{j+1}$ is legal.
 Therefore $(\eta_{\mathcal{P}_{n-1}}^j)_{0 \leq j \leq m}$ is a legal path.

Moreover, for all $j \in \{0,\dots,m\}$, $\eta_{\mathcal{P}_{n-1}}^j$ contains at most $n-1$ zeroes. 
Indeed, if $\eta^j = 1_\Lambda$, then $\eta_{\mathcal{P}_{n-1}}^j$ contains no zero at all. In addition, 
 if $\eta^j \neq 1_\Lambda$, then $\eta^j \in V(n,\Lambda) \setminus \{1_\Lambda\}$, 
 and since we suppose $\mathcal{H}_{n-1}$, we can apply lemma \ref{key_lemma_dim1}, which 
 yields that $\eta^j$ has at least one zero in $\Lambda \setminus \mathcal{P}_{n-1}$, 
 hence $\eta_{\mathcal{P}_{n-1}}^j$ contains at most $n-1$ zeroes.

It follows that $(\eta_{\mathcal{P}_{n-1}}^j)_{0 \leq j \leq m}$ is an $(n-1)$-legal path from 
$1_{\mathcal{P}_{n-1}}$ to $\eta_{\mathcal{P}_{n-1}}$. Thus $\eta_{\mathcal{P}_{n-1}} \in V(n-1,\mathcal{P}_{n-1})$. 
Consequently, by $\mathcal{H}_{n-1}$, $\eta_0=1$, which proves $\mathcal{H}_n$.

This ends the proof of theorem \ref{key_thm_simple} given lemma \ref{key_lemma_dim1}, so we are only left to prove 
lemma \ref{key_lemma_dim1}.

\begin{proof}[Proof of lemma \ref{key_lemma_dim1}]
 
 Let $n \geq 1$ and $\Lambda \subset \mathds{Z}$ be such that $\mathcal{P}_n \subset \Lambda$.
 
 We will consider a configuration $\eta \in \{0,1\}^\Lambda$, 
 different from $1_\Lambda$, containing at most $n$ zeroes, such 
 that all of its zeroes are in $\mathcal{P}_{n-1}$, 
 and we will show that $\eta \not\in V(n,\Lambda)$; this is enough to prove the lemma.
 
 We begin by noticing that if there does not exist an $n$-legal path from $\eta$ to $1_\Lambda$, then 
 $\eta \not\in V(n,\Lambda)$. Indeed, if $\eta \in V(n,\Lambda)$, there exists an $n$-legal path 
 $(\eta^j)_{0 \leq j \leq m}$ from $1_\Lambda$ to $\eta$, and one can check that $(\eta^{m-j})_{0 \leq j \leq m}$ 
 is an $n$-legal path from $\eta$ to $1_\Lambda$. 
 Therefore, to prove that $\eta \not \in V(n,\Lambda)$, it is enough 
 to show that there is no $n$-legal path from $\eta$ to $1_\Lambda$.
 In order to do that, we let $(\eta^j)_{0 \leq j \leq m}$ be an $n$-legal path with $\eta^0=\eta$. We are going to 
 show that $\eta^m$ cannot be $1_\Lambda$. 
 
 To this end, we will denote (see figure \ref{figure_lemme_dim1}): 
 \begin{align*}
  B = & \{-a_n+a_{n-1},\dots,-a_n+a_{n-1}+r-1\} \cup \{b_n-(b_{n-1}+r)+1,\dots,b_n-b_{n-1}\}, \\
  D = & \{-a_n+a_{n-1}+r,\dots,b_n-(b_{n-1}+r)\}, \\
  D_1 = & \{b_n-(b_{n-1}+a_{n-1}+r)+1,\dots,b_n-(b_{n-1}+r)\}, \\
  D_1' = & \{b_n-(b_{n-1}+a_{n-1}+r)+1,\dots,b_n\}
 \end{align*}
and $C = \Lambda\setminus(B \cup D)$ (if $n=1$, $D_1$ will be empty).

\begin{figure}
 \begin{center}
 \begin{tikzpicture}[scale=0.2]
\draw(-26,0)--(35,0);
\foreach \i in {-26,-25,...,35} \draw (\i,0.5)--(\i,-0.5);
\draw (-29,0) node{$\Lambda$};
\draw[very thick] (-19,0.5)--(-19,-0.5);
\draw[very thick] (-11,0.5)--(-11,-0.5);
\draw[very thick] (-8,0.5)--(-8,-0.5);
\draw[ultra thick] (0,0.5)--(0,-0.5);
\draw[very thick] (10,0.5)--(10,-0.5);
\draw[very thick] (18,0.5)--(18,-0.5);
\draw[very thick] (21,0.5)--(21,-0.5);
\draw[very thick] (31,0.5)--(31,-0.5);
\draw [decorate,decoration={brace}] (-26.5,0.7)--(-11.5,0.7) node[midway,above]{$C$};
\draw [decorate,decoration={brace}] (-11.5,0.7)--(-8.5,0.7) node[midway,above]{$B$};
\draw [decorate,decoration={brace}] (-8.5,0.7)--(10.5,0.7) node[midway,above]{$\mathcal{P}_{n-1}$};
\draw [decorate,decoration={brace}] (10.5,0.7)--(18.5,0.7) node[midway,above]{$D_1$};
\draw [decorate,decoration={brace}] (18.5,0.7)--(21.5,0.7) node[midway,above]{$B$};
\draw [decorate,decoration={brace}] (21.5,0.7)--(35.5,0.7) node[midway,above]{$C$};
\draw [decorate,decoration={brace}] (10.5,3.5)--(31.5,3.5) node[midway,above]{$D_1'$};
\draw [dotted] (10.5,3.5)--(10.5,0.7);
\draw [dotted] (31.5,3.5)--(31.5,0.7);
\draw [decorate,decoration={brace}] (-8.5,4.2)--(18.5,4.2) node[midway,above]{$D$};
\draw [dotted] (-8.5,4.2)--(-8.5,0.7);
\draw [dotted] (18.5,4.2)--(18.5,0.7);
\draw [decorate,decoration={brace}] (-19.5,6.8)--(31.5,6.8) node[midway,above]{$\mathcal{P}_n$};
\draw [dotted] (-19.5,6.8)--(-19.5,0.7);
\draw [dotted] (31.5,6.8)--(31.5,0.7);
\draw [<->] (-19,-1)--(-11,-1) node[midway,below]{$a_{n-1}$};
\draw [<->] (-11,-1)--(-8,-1) node[midway,below]{$r$};
\draw [<->] (-8,-1)--(0,-1) node[midway,below]{$a_{n-1}$};
\draw [<->] (0,-1)--(10,-1) node[midway,below]{$b_{n-1}$};
\draw [<->] (10,-1)--(18,-1) node[midway,below]{$a_{n-1}$};
\draw [<->] (18,-1)--(21,-1) node[midway,below]{$r$};
\draw [<->] (21,-1)--(31,-1) node[midway,below]{$b_{n-1}$};
\draw (0,-1) node[below]{0};
\draw [ultra thick] (-19,0.5)--(-19,-0.5);
\draw (-19,-1) node[below]{$-a_n$};
\draw [ultra thick] (31,0.5)--(31,-0.5);
\draw (31,-1) node[below]{$b_n$};
 \end{tikzpicture}
  \caption{The setting of lemma \ref{key_lemma_dim1}.}
  \label{figure_lemme_dim1}
 \end{center}
\end{figure}
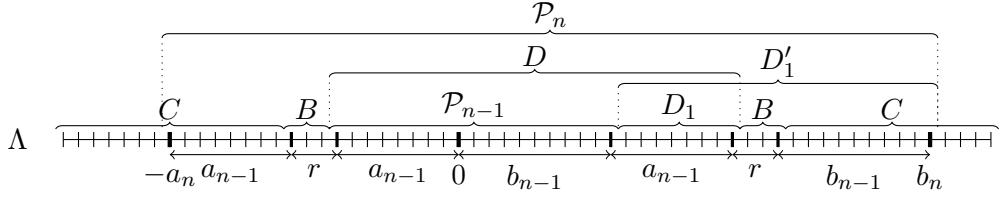

 We notice that 
 \[
  -a_n+a_{n-1}+r = -r(2^n-1)+r(2^{n-1}-1)+r = -r2^{n-1}+r = -r(2^{n-1}-1) = -a_{n-1}
 \]
and 
\[
 b_n-(b_{n-1}+a_{n-1}+r) = rn2^{n-1}-(r(n-1)2^{n-2}+r(2^{n-1}-1)+r)
  = rn2^{n-2}-r2^{n-2} = r(n-1)2^{n-2} = b_{n-1}
\]
hence $\mathcal{P}_{n-1} = \{-a_n+a_{n-1}+r,\dots,b_n-(b_{n-1}+a_{n-1}+r)\} = D \setminus D_1$.

   $B$ will be a ``buffer zone'': we will prove that it remains full of ones and prevents the zeroes of $C$ and 
  $D$ from interacting. \\
  There will always be a zero in $\mathcal{P}_{n-1}$, because 
  the leftmost zero $z$ in $\mathcal{P}_{n-1}$ would need an update rule full of zeroes to disappear. However, 
  there is no zero in $B$ and the thickness of $B$ is larger 
  than the range of the interactions, hence this update rule cannot use zeroes in $B$ 
  or at the left of $B$. Thus it can use only zeroes in $\mathcal{P}_{n-1}$ or at 
  the right of $\mathcal{P}_{n-1}$, but $z$ is the leftmost zero in $\mathcal{P}_{n-1}$. 
  Therefore, the update rule would have to be completely 
  contained in the right of $z$, which is impossible since we assumed that -1 was a stable direction, hence 
  there is no update rule contained in $\mathds{N}^*$. 
  Hence the leftmost zero in $\mathcal{P}_{n-1}$ cannot disappear, thus 
  there will always be a zero in $\mathcal{P}_{n-1}$, which implies $\eta^m\neq1_\Lambda$.
   
  More rigorously, we are going to prove by induction on 
 $j \in \{0,\dots,m\}$ that the property $\mathcal{H}_j'$ holds, where $\mathcal{H}_j'$ consists in:
 \begin{itemize}
  \item[$(P_1^j)$] $\eta_{\mathcal{P}_{n-1}}^j$ contains a zero.
  \item[$(P_2^j)$] $\eta_{B}^j = 1_{B}$.
  \item[$(P_3^j)$] $\eta_C^j 1_{\Lambda \setminus C} \in V(n-1,\Lambda)$.
  \item[$(P_4^j)$] $\eta_{D_1}^j 1_{D_1' \setminus D_1} \in V(n-1,D_1')$.
 \end{itemize}
 The last two properties will be used to show that $B$ remains full of ones.
 
 If we can show $\mathcal{H}_j'$ for all $j \in \{0,\dots,m\}$, in particular $(P_1^m)$ will imply 
 that there is a zero in $\eta_{\mathcal{P}_{n-1}}^m$, thus $\eta^m \neq 1_\Lambda$, 
 which is enough to prove the lemma.  
 
 Let us prove $\mathcal{H}_j'$ for all $j \in \{0,\dots,m\}$.
 
 \emph{Case $j = 0$.} \\
  $(P_1^0)$ is true, because $\eta^0 = \eta \neq 1_\Lambda$, so $\eta$ contains at least a zero,  
  and by assumption all zeroes of $\eta$ are in $\mathcal{P}_{n-1}$.
   $(P_2^0)$ is true because $\eta^0 = \eta$ has no zero in $\Lambda \setminus \mathcal{P}_{n-1}$.
  $(P_3^0)$ is true because $C \subset \Lambda \setminus \mathcal{P}_{n-1}$, thus $\eta_C = 1_C$, hence 
  $\eta_C^0 1 _{\Lambda \setminus C} = 1_\Lambda \in V(n-1,\Lambda)$.
 $(P_4^0)$ is true, because $D_1 \subset \Lambda \setminus \mathcal{P}_{n-1}$, thus 
  $\eta_{D_1}^0 1_{D_1' \setminus D_1} = 1_{D_1'} \in V(n-1,D_1')$.
Consequently, $\mathcal{H}_0'$ holds.
 
 \emph{Induction.} \\
 Let $j$ be in $\{0,\dots,m-1\}$. We suppose that $\mathcal{H}_j'$ holds. Let us show 
 $\mathcal{H}_{j+1}'$.

 We know that the move from $\eta^j$ to $\eta^{j+1}$ is legal. If $\eta^{j+1} = \eta^j$, 
 $\mathcal{H}_{j+1}'$ holds because $\mathcal{H}_j'$ holds. In the following, we deal with the case 
 $\eta^{j+1} = (\eta^j)^z$ where $z \in \Lambda$ and there exists 
 $X \in \mathcal{U}$ with $(\eta^j_\Lambda)_{z+X}=0$. 
 The arguments will depend on the position of $z$. 
 
 \emph{Case $z \in B$.} \\
 We will show that $z \in B$ is impossible: 
 the buffer zone remains preserved at step $j+1$.
  
  By $(P_2^j)$ $\eta_{B}^j = 1_{B}$, hence $z + X \subset C \cup \Lambda^c \cup D$. Moreover, 
  if there existed $x \in (z+X) \cap (C \cup \Lambda^c)$ and $y \in (z+X) \cap D$, then we would get 
  $|x-y| > r$, which is impossible by the definition of $r$. 
  Therefore $z+X \subset C \cup \Lambda^c$ or $z+X \subset D$. We are going to deal with the two cases separately.
  
   We begin with the case $z+X \subset C \cup \Lambda^c$. \\
   We are going to prove that in this case, $(\eta_C^j 1_{\Lambda \setminus C})^z$ would be in $V(n-1,\Lambda)$, 
   which is impossible because it has a zero at $z$ and $z+\mathcal{P}_{n-1} \subset \Lambda$, 
   therefore $\mathcal{H}_{n-1}$ and the invariance by translation of $\mathds{Z}$ 
   yield a contradiction.
  Indeed, the move from $\eta_C^j 1_{\Lambda \setminus C}$ to 
 $(\eta_C^j 1_{\Lambda \setminus C})^z$ would be legal. In addition, 
 $(\eta_C^j 1_{\Lambda \setminus C})^z$ would coincide with $\eta^{j+1}$ on $C \cup B$ by $(P_2^j)$. Moreover,
 $\eta^{j+1}$ contains at most $n$ zeroes, and $\eta^{j+1}_{\mathcal{P}_{n-1}} = 
 \eta^j_{\mathcal{P}_{n-1}}$ would contain at least a zero by 
 $(P_1^j)$, hence $\eta^{j+1}$ contains at most $n-1$ zeroes in 
 $C \cup B$, thus $(\eta_C^j 1_{\Lambda \setminus C})^z$ would contain at most $n-1$ zeroes. 
 Furthermore by $(P_3^j)$, $\eta_C^j 1_{\Lambda \setminus C} \in V(n-1,\Lambda)$. 
 Therefore we could extend an $(n-1)$-legal path from $1_\Lambda$ to $\eta_C^j 1_{\Lambda \setminus C}$ 
 by adding the move from $\eta_C^j 1_{\Lambda \setminus C}$ to 
 $(\eta_C^j 1_{\Lambda \setminus C})^z$ and still have an $(n-1)$-legal path, 
 which would imply $(\eta_C^j 1_{\Lambda \setminus C})^z \in V(n-1,\Lambda)$, which is impossible.
 
 We now deal with the case $z+X \subset D$. \\
 We argue differently depending on the position of $z$. 
 \begin{itemize}
  \item If $z$ is in the left part of $B$, we can use the fact that -1 is a stable direction. Indeed, 
   $z+X$ would be at the right of $z$, hence $X$ would be contained in $\mathds{N}^*$, which yields a contradiction.
  \item If $z$ is in the right part of $B$, we can use an argument similar to the one we used to deal with 
  the case $z+X \subset C \cup \Lambda^c$: $(\eta_{D_1}^j 1_{D_1' \setminus D_1})^z$ would be in $V(n-1,D_1')$, 
   which is impossible because it has a zero at $z$ and $z+\mathcal{P}_{n-1} \subset D_1'$, 
   so by $\mathcal{H}_{n-1}$ there is a contradiction. Indeed, 
   $z+X$ would be contained in $D$ which is disjoint from 
   $D_1' \setminus D_1$, hence the move from $\eta_{D_1}^j 1_{D_1' \setminus D_1}$ to 
 $(\eta_{D_1}^j 1_{D_1' \setminus D_1})^z$ 
 would be legal. Furthermore, $(\eta_{D_1}^j 1_{D_1' \setminus D_1})^z$ would coincide with $\eta^{j+1}$ 
 on $D_1 \cup B$, hence would contain at most $n-1$ zeroes, 
 and by $(P_4^j)$ $\eta_{D_1}^j 1_{D_1' \setminus D_1} \in V(n-1,D_1')$. 
 This would allow us to deduce $(\eta_{D_1}^j 1_{D_1' \setminus D_1})^z \in V(n-1,D_1')$, 
 which is impossible.
 \end{itemize}
We deduce that $z+X \subset D$ is impossible.
  
  Consequently, $z \in B$ is impossible.
  
 \emph{Case $z \in C$.}\\
If $z \in C$, $(P_1^{j+1})$ is true because $\eta_{\mathcal{P}_{n-1}}^{j+1} = 
\eta_{\mathcal{P}_{n-1}}^j$, $(P_2^{j+1})$ is true because 
$\eta_{B}^{j+1} = \eta_{B}^j$, and $(P_4^{j+1})$ is true because $\eta_{D_1}^{j+1} = \eta_{D_1}^j$.
The argument to prove $(P_3^{j+1})$ is almost the same as the one that yielded 
$(\eta_C^j 1_{\Lambda \setminus C})^z\in V(n-1,\Lambda)$ in the case $z \in B$ and $z+X \subset C \cup \Lambda^c$. 
We observe that as $z \in C$, we have $z+X \subset \Lambda^c \cup C \cup B$, and since $(P_2^j)$ implies $\eta_{B}^j = 
1_{B}$, we get $z + X \subset \Lambda^c \cup C$, 
so the move from $\eta_C^j 1_{\Lambda \setminus C}$ to 
$\eta_C^{j+1}1_{\Lambda \setminus C}$ is legal. Furthermore, 
$\eta_C^{j+1}1_{\Lambda \setminus C}$ contains at most $n-1$ zeroes, and by $(P_3^j)$ we have 
$\eta_C^j 1_{\Lambda \setminus C} \in V(n-1,\Lambda)$. This allows us to conclude that 
$\eta_C^{j+1}1_{\Lambda \setminus C} \in V(n-1,\Lambda)$, which is $(P_3^{j+1})$. 
Consequently, $\mathcal{H}_{j+1}'$ holds.

\emph{Case $z \in D$.}\\
If $z \in D$, $(P_2^{j+1})$ is true because $\eta_{B}^{j+1} = 
\eta_{B}^j$, and $(P_3^{j+1})$ is true because $\eta_C^{j+1} = \eta_C^j$.

Let us prove $(P_1^{j+1})$. \\ 
 If $z \in D_1$, then $\eta_{\mathcal{P}_{n-1}}^{j+1} = \eta_{\mathcal{P}_{n-1}}^j$, hence  $(P_1^{j+1})$ is true.
 We now suppose $z \in \mathcal{P}_{n-1}$. We prove $(P_1^{j+1})$ using the fact that -1 
 is a stable direction. Indeed, it implies that $X$ is not contained in $\mathds{N}^*$, hence since 
 $X$ cannot contain 0, it contains an element 
 of $-\mathds{N}^*$, thus there exists $z'\in z+X$ with $z' < z$. In addition, as $z \in \mathcal{P}_{n-1}$ 
 we have $X \subset D \cup B$, and since by $(P_2^j)$ $\eta_{B}^j = 1_{B}$, we get 
 $z+X \subset D$, therefore $z' \in D$. Since $z' < z$, $z' \in \mathcal{P}_{n-1}$, and we have 
 $\eta_{z'}^{j+1}=\eta_{z'}^{j}=0$. Consequently $\eta_{\mathcal{P}_{n-1}}^{j+1}$ 
 contains a zero, hence $(P_1^{j+1})$ is true.

Now let us prove $(P_4^{j+1})$. \\
 If $z \in \mathcal{P}_{n-1}$, then $\eta_{D_1}^{j+1} = \eta_{D_1}^j$, hence $(P_4^{j+1})$ is true. 
In the case $z \in D_1$, we will prove $(P_4^{j+1})$ with the arguments that gave 
$(\eta_{D_1}^j 1_{D_1' \setminus D_1})^z \in V(n-1,D_1')$
in the case $z \in B$ and $z+X \subset D$. Since $z \in D_1$, $z+X \subset D \cup B$, and as $(P_2^j)$ implies 
$\eta_{B}^j = 1_{B}$ we get $z+X \subset D$, thus the move from 
$\eta_{D_1}^j 1_{D'_1 \setminus D_1}$ to $\eta_{D_1}^{j+1} 1_{D'_1 \setminus D_1}$ is legal, 
which allows to prove $\eta_{D_1}^{j+1} 1_{D'_1 \setminus D_1} 
\in V(n-1,D'_1)$. Therefore $(P_4^{j+1})$ is true.

This yields that $\mathcal{H}_{j+1}'$ holds.

To conclude, $\mathcal{H}_{j+1}'$ holds in all cases, which ends the proof of the lemma.
\end{proof}

\section{The general case}

The reasoning to prove theorem \ref{key_thm_simple} in general dimension is the same as in dimension 1. 
However, the geometry is significantly more complicated, which will force us to introduce new notation.

Let $\mathcal{U}$ be a non supercritical unrooted update family. We will need the 

\begin{lemma}\label{lemme_base}
 There exists $u_1,\dots,u_d \in S^{d-1}$ stable directions for $\mathcal{U}$ and 
 a normalized basis $\{v_1,\dots,v_d\}$ of $\mathds{R}^d$ such that for any $i \in \{1,\dots,d\}$, 
$\mathds{H}_{u_i}=\{(x_1,\dots,x_d) \in \mathds{R}^d \,|\, x_i > 0\}$ in this basis.
\end{lemma}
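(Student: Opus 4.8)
The plan is to extract $d$ stable directions whose associated open half-spaces are exactly the coordinate half-spaces $\{x_i>0\}$ in a suitable basis. Since $\mathcal{U}$ is not supercritical unrooted, by Definition~\ref{def_classification} the set $\mathcal{S}$ of stable directions of $\mathcal{U}$ is \emph{not} contained in any hyperplane of $\mathds{R}^d$ through the origin. First I would use this to produce $d$ stable directions $u_1,\dots,u_d$ that are linearly independent: pick $u_1 \in \mathcal{S}$ arbitrarily (nonempty since $\mathcal{U}$ is not supercritical unrooted, in particular $\mathcal{S}\not\subset\{0\}$-hyperplane forces $\mathcal{S}\neq\emptyset$), and inductively, having chosen linearly independent $u_1,\dots,u_k$ with $k<d$, note that $\mathrm{span}(u_1,\dots,u_k)$ is a proper subspace, hence contained in some hyperplane; since $\mathcal{S}$ is not contained in that hyperplane there is $u_{k+1}\in\mathcal{S}\setminus\mathrm{span}(u_1,\dots,u_k)$, keeping the family linearly independent. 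This yields a basis $(u_1,\dots,u_d)$ of $\mathds{R}^d$ consisting of stable directions (each of norm $1$ since $\mathcal{S}\subset S^{d-1}$).

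Next I would build the basis $\{v_1,\dots,v_d\}$ as (a normalization of) the \emph{dual basis} adjusted in sign. Concretely, let $(w_1,\dots,w_d)$ be the basis dual to $(u_1,\dots,u_d)$, i.e.\ $\langle w_i,u_j\rangle=\delta_{ij}$. For a point $x=\sum_j x_j v_j$ I want the condition $x_i>0$ to coincide with $x\in\mathds{H}_{u_i}=\{\langle x,u_i\rangle<0\}$; since $\langle v_j,u_i\rangle$ should equal a positive multiple of $-\delta_{ij}$, I set $v_i = -c_i\, w_i$ for suitable normalizing constants $c_i=1/\|w_i\| > 0$, so that $\|v_i\|=1$ and $\langle v_i,u_j\rangle = -c_i\delta_{ij}$. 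Then for $x=\sum_j x_j v_j$ one computes $\langle x,u_i\rangle = \sum_j x_j\langle v_j,u_i\rangle = -c_i x_i$, whence $x\in\mathds{H}_{u_i}\iff -c_i x_i<0\iff x_i>0$, which is exactly $\mathds{H}_{u_i}=\{(x_1,\dots,x_d)\,|\,x_i>0\}$ in the basis $\{v_1,\dots,v_d\}$. Since $(w_1,\dots,w_d)$ is a basis and each $v_i$ is a nonzero scalar multiple of $w_i$, $\{v_1,\dots,v_d\}$ is a basis of $\mathds{R}^d$, and it is normalized by construction.

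I do not expect a genuine obstacle here: the only subtlety is making sure the linear-independence extraction really uses the full strength of "not supercritical unrooted" — namely that the stable directions escape \emph{every} hyperplane through the origin, not merely the coordinate ones — which is precisely the negation of Definition~\ref{def_classification}. The rest is linear algebra: passing to the dual basis and fixing signs and norms. One should also record, for later use (as in the one-dimensional argument where stability of $-1$ forbids update rules inside $\mathds{N}^*$), that stability of $u_i$ means no $X\in\mathcal{U}$ satisfies $X\subset\mathds{H}_{u_i}$, i.e.\ in the basis $\{v_1,\dots,v_d\}$ every update rule $X$ has a point with nonpositive $i$-th coordinate for each $i$; but that is an immediate restatement and not part of proving the lemma itself.
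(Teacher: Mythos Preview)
Your proposal is correct and follows essentially the same approach as the paper: both select $d$ linearly independent stable directions $u_1,\dots,u_d$ and then take $v_i$ to be a unit vector orthogonal to all $u_j$ with $j\neq i$ (the paper phrases this as a unit vector in $\bigcap_{j\neq i}\mathcal{H}_{u_j}$, you as the normalized dual basis), adjusting the sign so that $\langle v_i,u_i\rangle<0$. Your dual-basis formulation is slightly more streamlined, since it bypasses the paper's separate verification that $\{v_1,\dots,v_d\}$ spans $\mathds{R}^d$ and fixes the sign from the outset via the explicit computation $\langle x,u_i\rangle=-c_i x_i$.
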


 To construct this basis, one takes $v_i$ orthogonal to all $u_j$ with $j \neq i$.
 A rigorous proof of the construction may be found in the appendix. 
From now on, we will use the coordinates of the basis $\{v_1,\dots,v_d\}$, but when we say a site 
is in $\mathds{Z}^d$, we will mean that its coordinates in the canonical basis are integers. 
For any $i \in \{1,\dots,d\}$, since $u_i$ is a stable direction,  
there is no update rule contained in $\mathds{H}_{u_i}$, hence 
no update rule such that all sites have a positive $i$-th coordinate.

We denote again by $r$ the range of the interactions: 
$r = \max \{ \|x-y\|_\infty \,|\, x,y \in X \cup \{0\}, X \in \mathcal{U}\}$ 
(beware: the range is now defined in our new basis), and for all $n \in \mathds{N}$, 
we set again $a_n = r(2^n-1)$ and $b_n = rn2^{n-1}$. We now have to define $\mathcal{P}_n$ 
as follows (see figure \ref{figure_Pn}): 
\[
 \mathcal{P}_n = \{s \in \mathds{Z}^d \,|\, s=(s_1,\dots,s_d), 
 \forall i \in \{1,\dots,d\}, -a_n \leq s_i \leq b_n \}.
\]

\begin{figure}
 \begin{center}
  \begin{tikzpicture}
\draw (0,0)--(5,0)--(7.5,4.33)--(2.5,4.33)--cycle;
\draw (3,1.73) node{$\times$} node [below left]{$0$};
\draw [->] (3,1.73)--(4,1.73) node[midway,below] {$v_1$} ;
\draw [->] (3,1.73)--(3.5,2.6) node[midway,above left] {$v_2$} ;
\draw [->] (6,1.73)--(5.13,2.23) node[midway,below left] {$u_1$} ;
\draw [->] (4.5,4.33)--(4.5,3.33) node[midway,left] {$u_2$} ;
\draw [dotted] (3,1.73)--(2,0) ;
\draw [<->] (-0.4,0)--(0.6,1.73) node [midway,left] {$a_n$} ;
\draw [<->] (0.6,1.73)--(2.1,4.33) node [midway,left] {$b_n$} ;
\draw [dotted] (3,1.73)--(1,1.73) ;
\draw [<->] (0,-0.3)--(2,-0.3) node [midway,below] {$a_n$} ;
\draw [<->] (2,-0.3)--(5,-0.3) node [midway,below] {$b_n$} ;
\end{tikzpicture}
  \caption{$\mathcal{P}_n$.}
  \label{figure_Pn}
 \end{center}
\end{figure}

We will again prove the theorem by induction: for all $n \in \mathds{N}$, we denote 
\[
 \mathcal{H}_n = 
``\text{for any }\Lambda \subset \mathds{Z}^d\text{ such that }\mathcal{P}_n \subset \Lambda, 
 \text{ for any }\eta \in V(n,\Lambda), \eta_0 = 1 \text{''}.  
\]
 Proving $\mathcal{H}_n$ for all $n \in \mathds{N}$ proves theorem \ref{key_thm_simple}. 
 In order to do that, we need the following equivalent of lemma \ref{key_lemma_dim1}:
 
 \begin{lemma} \label{key_lemma}
 Let $n \geq 1$ and suppose $\mathcal{H}_{n-1}$. Then, for all $\Lambda \subset \mathds{Z}^d$ such that 
  $\mathcal{P}_n \subset \Lambda$, for all $\eta \in 
  V(n,\Lambda) \setminus \{1_{\Lambda}\}$, $\eta$ has at least one zero in 
  $\Lambda \setminus \mathcal{P}_{n-1}$.
 \end{lemma}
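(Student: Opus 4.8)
The plan is to reproduce the structure of the proof of Lemma \ref{key_lemma_dim1}, only replacing the one--dimensional geometry by its $d$--dimensional counterpart. By Lemma \ref{lemme_base} we may work in the basis $\{v_1,\dots,v_d\}$, in which, for every $i\in\{1,\dots,d\}$, no update rule is contained in $\{x_i>0\}$. Exactly as in dimension $1$, it suffices to fix $\Lambda$ with $\mathcal{P}_n\subset\Lambda$ and a configuration $\eta\neq 1_\Lambda$ having at most $n$ zeroes, all of them inside $\mathcal{P}_{n-1}$, and to show $\eta\notin V(n,\Lambda)$; reversing an $n$--legal path, this amounts to showing that for every $n$--legal path $(\eta^j)_{0\le j\le m}$ with $\eta^0=\eta$ one has $\eta^m\neq 1_\Lambda$, which I would obtain by proving, by induction on $j$, that $\eta^j$ always keeps a zero in $\mathcal{P}_{n-1}$.

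For this induction I would set up regions generalizing those of figure \ref{figure_lemme_dim1}: a ``central'' region $D$ containing $\mathcal{P}_{n-1}$, obtained from it by thickening, across each of the $d$ ``high'' faces $\{s_i=b_{n-1}\}$ (the faces \emph{not} shielded by the stable direction $u_i$), by a layer of width $a_{n-1}$; for each $i$, a region $D_1^{(i)}$ equal to that layer and a region $D_1'^{(i)}$ equal to the portion of $\Lambda$ lying beyond $\mathcal{P}_{n-1}$ in the direction $v_i$; a ``buffer'' $B$ of thickness $r$ surrounding $D$; and $C=\Lambda\setminus(B\cup D)$. The numerical identities used in dimension $1$ are what makes $B$, $D$ and the $D_1'^{(i)}$ fit inside $\mathcal{P}_n\subset\Lambda$ and, more importantly, guarantee $z+\mathcal{P}_{n-1}\subset\Lambda$ when $z$ lies in the part of $B$ beyond a ``low'' face, and $z+\mathcal{P}_{n-1}\subset D_1'^{(i)}$ when $z$ lies in the part of $B$ beyond the $i$--th ``high'' face. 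The invariants $\mathcal{H}_j'$ carried along the path would be the analogues of $(P_1^j)$--$(P_4^j)$: $\eta_{\mathcal{P}_{n-1}}^j$ contains a zero; $\eta_B^j=1_B$; $\eta_C^j 1_{\Lambda\setminus C}\in V(n-1,\Lambda)$; and, for every $i\in\{1,\dots,d\}$, $\eta_{D_1^{(i)}}^j 1_{D_1'^{(i)}\setminus D_1^{(i)}}\in V(n-1,D_1'^{(i)})$.

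In the inductive step the move is $\eta^{j+1}=(\eta^j)^z$ with $(\eta^j_\Lambda)_{z+X}=0$ for some $X\in\mathcal{U}$, and one argues by the position of $z$. If $z\in B$: since $B$ is full of ones and of thickness $r$, necessarily $z+X\subset C\cup\Lambda^c$ or $z+X\subset D$; in the first case one prolongs the $(n-1)$--legal path given by $(P_3^j)$ and contradicts $\mathcal{H}_{n-1}$ via $z+\mathcal{P}_{n-1}\subset\Lambda$; in the second case one contradicts either a stable direction (when $z$ is beyond a low face, forcing $X\subset\{x_i>0\}$ for the relevant $i$) or, through $(P_4^{j,i})$ and $z+\mathcal{P}_{n-1}\subset D_1'^{(i)}$, $\mathcal{H}_{n-1}$ again (when $z$ is beyond the $i$--th high face). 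If $z\in C$ or $z\in D_1^{(i)}$, one merely updates $(P_3^j)$, respectively $(P_4^{j,i})$, exactly as in dimension $1$. The delicate case, which I expect to be the crux, is $z\in\mathcal{P}_{n-1}$ with the move destroying a zero: in dimension $1$ this was dispatched in one line (``the leftmost zero of $\mathcal{P}_{n-1}$ cannot disappear'', using the single stable direction), whereas here $\mathcal{P}_{n-1}$ has $d$ unshielded faces and, near its ``corners'', one has to combine all $d$ stable directions $u_1,\dots,u_d$ with a carefully chosen — in particular ordered, to break ties between coordinates — collection of auxiliary regions in order to exhibit among the sites of $z+X$ one that is again a zero of $\mathcal{P}_{n-1}$. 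Designing that collection of regions so that, simultaneously, the range--$r$/buffer separations hold, every translated copy of $\mathcal{H}_{n-1}$ lands in the domain it is meant to, and this surviving--zero argument goes through at corners, is where essentially all the extra work compared to dimension $1$ sits.
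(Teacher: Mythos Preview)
Your regions and overall inductive structure match the paper's. The genuine gap is the invariant $(P_1^j)$: in dimension $d\ge 2$ one \emph{cannot} in general maintain that $\eta^j_{\mathcal{P}_{n-1}}$ contains a zero, and the remedy you outline for the ``crux'' case --- exhibiting among the sites of $z+X$ one that lies in $\mathcal{P}_{n-1}$ --- fails at corners. The stability of $u_1,\dots,u_d$ only guarantees that every $X\in\mathcal{U}$ has, for each $i$, \emph{some} site with $i$--th coordinate $\le 0$; it does not guarantee a site with \emph{all} coordinates $\le 0$. If $X$ has no such site (e.g.\ in $d=2$, $X=\{(-1,1),(1,-1)\}$ in the $v$--basis) and $z$ sits at the top corner $(b_{n-1},\dots,b_{n-1})$ of $\mathcal{P}_{n-1}$, then every site of $z+X$ has some coordinate exceeding $b_{n-1}$, so $(z+X)\cap\mathcal{P}_{n-1}=\emptyset$ and $z+X\subset\bigcup_i D_i$. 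If those sites are zeros and $z$ is the unique zero of $\mathcal{P}_{n-1}$, the flip at $z$ is legal and empties $\mathcal{P}_{n-1}$ of zeros. No ordering or tie--breaking rescues this, because the obstruction is not about which zero you track but that $(z+X)\cap\mathcal{P}_{n-1}$ is empty; the paper says explicitly that ``here we cannot keep a zero in $\mathcal{P}_{n-1}$''.

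The paper resolves this not by adding auxiliary regions but by \emph{weakening} $(P_1^j)$ to: for every $i\in\{1,\dots,d\}$, $\eta^j_{D\setminus D_i}$ contains a zero. Each of these $d$ statements is preserved using only the single stable direction $u_i$: since $X$ has a site with $i$--th coordinate $\le 0$, for $z\in D\setminus D_i$ the corresponding $z'\in z+X$ satisfies $z'_i\le z_i\le b_{n-1}$, and $z'\in D$ (because $z+X\subset D$), hence $z'\in D\setminus D_i$. Any one of these invariants already gives $\eta^m\neq 1_\Lambda$; and since $D\setminus D_i$ is disjoint from $C\cup B$ and from $D_i$, the weakened $(P_1^j)$ still supplies the ``at most $n-1$ zeros'' count needed to propagate $(P_3^j)$ and each $(P_4^{j,i})$. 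With this single change the proof becomes a direct transcription of the one--dimensional argument, with no extra corner analysis at all.
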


 The proof of theorem \ref{key_thm_simple} given lemma \ref{key_lemma} is exactly the same as in 
 the one-dimensional case, therefore it is enough to prove lemma \ref{key_lemma}.
 
\begin{proof}[Proof of lemma \ref{key_lemma}]
 
 Let $n \geq 1$ and $\Lambda \subset \mathds{Z}^d$ such that $\mathcal{P}_n \subset \Lambda$.
 
 As in the one-dimensional case, we consider a configuration $\eta \in \{0,1\}^\Lambda$, 
 different from $1_\Lambda$, containing at most $n$ zeroes, such 
 that all of its zeroes are in $\mathcal{P}_{n-1}$, and we prove that $\eta \not\in V(n,\Lambda)$. 
 As previously, it is enough to let $(\eta^j)_{0 \leq j \leq m}$ be an $n$-legal path with $\eta^0=\eta$, and to 
 show that $\eta^m$ cannot be $1_\Lambda$. 
    
 To this end, we denote for all $i \in \{1,\dots,d\}$ (see figure \ref{figure_lemme}): 
 \begin{align*}
  D & = \{ s \in \mathds{Z}^d \,|\, s=(s_1,\dots,s_d), 
 \forall j \in \{1,\dots,d\}, -a_n+a_{n-1}+r \leq s_j \leq b_n-(b_{n-1}+r)\}, \\
  B & = \{s \in \mathds{Z}^d \,|\, s=(s_1,\dots,s_d),\forall j \in \{1,\dots,d\}
  , -a_n+a_{n-1} \leq s_j \leq b_n-b_{n-1}\} \setminus D, \\
  D_i & = \{s \in D \,|\,s=(s_1,\dots,s_d), s_i > b_n-(b_{n-1}+a_{n-1}+r)\}, \\
  D_i' & = \{s \in \mathcal{P}_n \,|\,s=(s_1,\dots,s_d), s_i > b_n-(b_{n-1}+a_{n-1}+r)\}
 \end{align*}
and $C = \Lambda\setminus(B \cup D)$. We also notice that as in dimension 1, $-a_n+a_{n-1}+r=-a_{n-1}$ and 
$b_n-(b_{n-1}+a_{n-1}+r) = b_{n-1}$, hence  
\[
 \mathcal{P}_{n-1} = \{s \in \mathds{Z}^d \,|\, s=(s_1,\dots,s_d),
 \forall j\in \{1,\dots,d\}, -a_n+a_{n-1}+r \leq s_j \leq b_n-(b_{n-1}+a_{n-1}+r)\}
\]
thus $\mathcal{P}_{n-1}= D \setminus (\bigcup_{i=1}^d D_i)$.

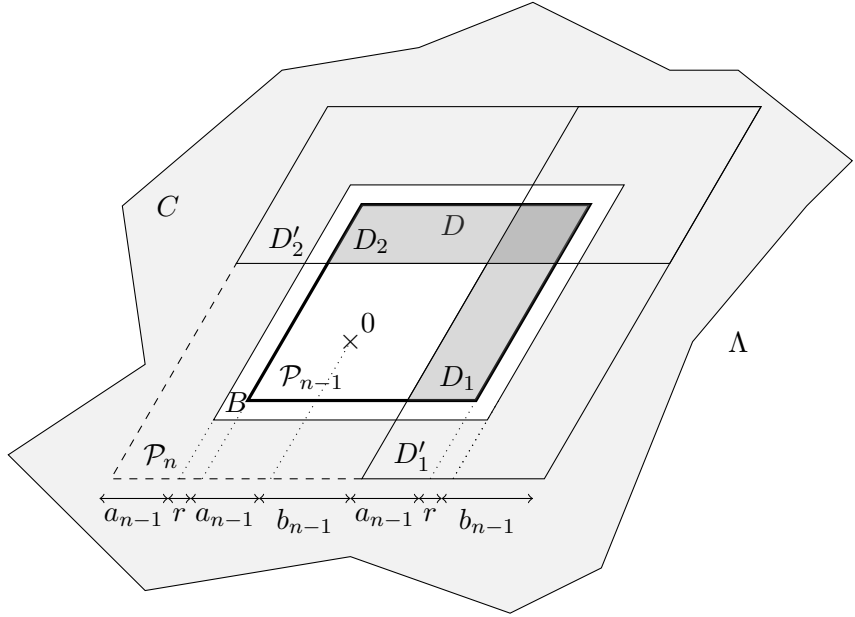
\begin{figure}
 \begin{center}
  \begin{tikzpicture}[scale=0.3]
\draw[black,fill=gray,fill opacity=0.1] (-15,-5)--(-9,-11)--(0,-9.5)--(7,-12)--(11,-10)--(15,0)--(20,6)--(22,8)--(17,12)--(14,12)--(8,15)--(3,13)--(-3,12)--(-10,6)--(-9,-1)--cycle;
\draw (17,0) node{$\Lambda$};
\draw (-8,6) node{$C$};
\draw[black,fill=white] (-6,-3.46)--(6,-3.46)--++(6,10.39)--++(-12,0)--cycle;
\draw (-5,-2.66) node{$B$};
\draw (0,0) node{$\times$} node[above right]{$0$};
\draw(6,3.46)--(2.5,-2.6)--(-4.5,-2.6)--(-1,3.46)--cycle;
\draw (-1.7,-1.6) node{$\mathcal{P}_{n-1}$};
\draw[very thick] (10.5,6.06)--(5.5,-2.6)--++(-10,0)--++(5,8.66)--cycle;
\draw (5.5,6.06) node[below left]{$D$};
\fill[color=gray,opacity=0.3] (-1,3.46)--++(10,0)--++(1.5,2.6)--++(-10,0)--cycle;
\fill[color=gray,opacity=0.3] (2.5,-2.6)--++(3,0)--++(5,8.66)--++(-3,0)--cycle;
\draw (4.7,-1.6) node{$D_1$};
\draw (0.9,4.46) node{$D_2$};
\draw (0.5,-6.06)--++(8,0)--++(9.5,16.45)--++(-8,0)--cycle;
\draw (2.7,-5.06) node{$D_1'$};
\draw (-5,3.46)--++(19,0)--++(4,6.93)--++(-19,0)--cycle;
\draw (-2.8,4.46) node{$D_2'$};
\draw[dashed] (0.5,-6.06)--(-10.5,-6.06)--(-5,3.46);
\draw (-8.3,-5.06) node{$\mathcal{P}_n$};
\draw[<->] (-11,-6.93)--(-8,-6.93) node[midway,below]{$a_{n-1}$};
\draw[<->] (-8,-6.93)--(-7,-6.93) node[midway,below]{$r$};
\draw[<->] (-7,-6.93)--(-4,-6.93) node[midway,below]{$a_{n-1}$};
\draw[<->] (-4,-6.93)--(0,-6.93) node[midway,below]{$b_{n-1}$};
\draw[<->] (0,-6.93)--(3,-6.93) node[midway,below]{$a_{n-1}$};
\draw[<->] (3,-6.93)--(4,-6.93) node[midway,below]{$r$};
\draw[<->] (4,-6.93)--(8,-6.93) node[midway,below]{$b_{n-1}$};
\draw[dotted] (-7.5,-6.06)--(-6,-3.46);
\draw[dotted] (4.5,-6.06)--(6,-3.46);
\draw[dotted] (-6.5,-6.06)--(-4.5,-2.6);
\draw[dotted] (3.5,-6.06)--(5.5,-2.6);
\draw[dotted] (4.5,-6.06)--(6.5,-2.6);
\draw[dotted] (-3.5,-6.06)--(0,0);
    \end{tikzpicture}
  \caption{The setting of lemma \ref{key_lemma}. $C$ is in light gray, $D_1$ and $D_2$ are in darker gray, $D$ 
  is the region with the thick outline.}
  \label{figure_lemme}
 \end{center}
\end{figure}
  
   As in the one-dimensional case, $B$ will be a buffer zone preventing the zeroes of $C$ and $D$ 
   from interacting. In that case, the main reason 
   for which no zero could appear in $B$ was that a zero remained trapped in $\mathcal{P}_{n-1}$, 
   hence there were at most $n-1$ zeroes elsewhere, and $\mathcal{H}_{n-1}$ limited their 
   possible positions. \\
   Here we cannot keep a zero in $\mathcal{P}_{n-1}$, but we can keep a zero in all the 
   $D \setminus D_i$. Indeed, initially there is at least a zero in 
   $\mathcal{P}_{n-1} \subset D \setminus D_i$, 
   and at any time, a zero of $D \setminus D_i$ with the lowest $i$-th coordinate 
   among the zeroes of $D \setminus D_i$ will need an update rule full of zeroes 
   in order to disappear, hence a zero with a $i$-th coordinate as 
   low as its own because there is no update rule whose sites all have positive 
   $i$-th coordinate (this is the reason for which 
   we work in the basis $\{v_1,\dots,v_d\}$). This zero cannot be in $B$ 
   since $B$ remains full of ones, hence it is in $D \setminus D_i$ and so remains in $D \setminus D_i$ 
   at the next step of the path. \\
   This will have the same practical consequences as the zero trapped in $\mathcal{P}_{n-1}$ had 
   in the one-dimensional case: the presence of 
   a zero in each of the $D \setminus D_i$ prevents $\eta^m$ from being $1_\Lambda$; the $n-1$ zeroes that 
   any of the $D_i$, or $C$, may contain will not escape the $D_i$ or $C$. 
   Moreover, for any $i \in \{1,\dots,d\}$, the argument that 
   in dimension 1 prevented the zeroes of $\mathcal{P}_{n-1}$ 
   from escaping to the left part of $B$ because there were no update rule contained in $\mathds{N}^*$ 
   will here prevent zeroes from escaping $D$ via the face with the lowest $i$-th coordinate to enter $B$, since
   there is no update rule whose sites all have positive $i$-th coordinates. 
   Therefore the buffer zone $B$ will be preserved.
   
   The details of the proof are very similar to those of the proof of lemma \ref{key_lemma_dim1}, therefore we 
   only detail the changes. 
   
  We have to change the induction hypothesis $\mathcal{H}_j'$, which becomes:
 \begin{itemize}
  \item[$(P_1^j)$] For all $i \in \{1,\dots,d\}$, $\eta_{D \setminus D_i}^j$ contains a zero.
  \item[$(P_2^j)$] $\eta_{B}^j = 1_{B}$.
  \item[$(P_3^j)$] $\eta_C^j 1_{\Lambda \setminus C} \in V(n-1,\Lambda)$.
  \item[$(P_4^j)$] For all $i \in \{1,\dots,d\}$, $\eta_{D_i}^j 1_{D_i' \setminus D_i} \in V(n-1,D_i')$.
 \end{itemize}
 
 When proving the induction, the more complicated geometry forces us to refine the proof of the fact that the case 
 $z \in B$ and $z+X \subset D$ is impossible.
 Since $z \in B$, if we denote by $(z_1,\dots,z_d)$ the coordinates of 
 $z$, there would exist $i \in \{1,\dots,d\}$ such that $z_i < -a_n+a_{n-1}+r$ ($z$ is ``at the left of $B$ 
 for the $i$-th coordinate'') or  $z_i > b_n -(b_{n-1}+r)$ ($z$ is 
 ``at the right of $B$ for the $i$-th coordinate'').
 \begin{itemize}
  \item If $z_i < -a_n+a_{n-1}+r$, we notice that $z+X \subset D$ would imply that 
 $X \subset \{(x_1,\dots,x_d) \in \mathds{R}^d\,|\, x_i > 0 \}$, which is impossible because there is 
 no update rule whose sites all have a positive $i$-th coordinate. 
  \item If $z_i > b_n -(b_{n-1}+r)$, we can use the same argument as in dimension 1 with $D_i$ replacing $D_1$, 
  which yields a contradiction.
 \end{itemize}
We deduce a contradiction in both cases, therefore $z+X \subset D$ is indeed impossible.
  
 Finally, the proof of $(P_1^{j+1})$ when $z \in D$ also deserves a refinement. 
We set $i \in \{1,\dots,d\}$, let us prove that $\eta_{D \setminus D_i}^{j+1}$ contains a zero. 
If $z \in D_i$, then $\eta_{D \setminus D_i}^{j+1} = \eta_{D \setminus D_i}^j$, hence by $(P_1^j)$ 
 $\eta_{D \setminus D_i}^{j+1}$ contains a zero. If $z \in D \setminus D_i$, 
 we use the fact that $X$ cannot be contained in $\{(x_1,\dots,x_d) \in \mathds{R}^d \,|\, x_i > 0\}$, hence 
 there exists a site $z' \in z+X$ such that the $i$-th coordinate of $z'$ is less 
 than or equal to the $i$-th coordinate of $z$. Moreover, we observe that 
 $z+X \subset D \cup B$, and by $(P_2^j)$ $\eta_{B}^j = 1_{B}$, thus $z+X \subset D$, so $z' \in D$. Since
 the $i$-th coordinate of $z'$ is less than or equal to the $i$-th coordinate of $z$ and 
 $z \in D \setminus D_i$, $z' \in D \setminus D_i$. Furthermore, we have $\eta_{z'}^{j+1} = \eta_{z'}^j =0$. 
 Consequently, $\eta_{D \setminus D_i}^{j+1}$ contains a zero.
Therefore $\eta_{D \setminus D_i}^{j+1}$ contains a zero for all 
$i \in \{1,\dots,d\}$, hence $(P_1^{j+1})$ is true.
\end{proof}

\section{Sketch of the proof of proposition \ref{prop_unrooted}}\label{section_unrooted}
For $d=1$, if $\mathcal{U}$ is a supercritical unrooted family, it has no stable direction, therefore 
there must be an update rule contained in $\mathds{N}^*$ and another contained in $-\mathds{N}^*$.
Consequently, as illustrated by figure \ref{figure_intervalle}, if we have an interval $I \subset \mathds{Z}$ 
of zeroes that is sufficiently large, the site $s$ at the right of $I$ can be put at zero with a legal move. 
Then the site $s'$ at the left of the interval can be put at one by a legal move, and $I$ has moved 
to the right by one unit. By having $I$ starting from outside the domain (where there are only zeroes) 
and moving towards the origin in that way, one can put the origin at zero using a bounded number of zeroes, 
whatever the size of the domain.

 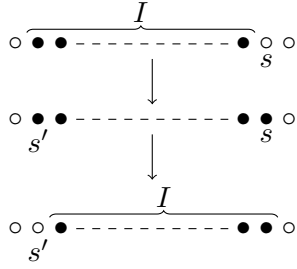
\begin{figure}
 \begin{center}
   \begin{tikzpicture}
   \draw[dashed] (-1,0)--(1,0);
\draw (-1.2,0) node{$\bullet$};
\draw (-1.5,0) node{$\bullet$};
\draw (-1.8,0) node{$\circ$};
\draw (1.2,0) node{$\bullet$};
\draw (1.5,0) node{$\circ$} node[below] {$s$};
\draw (1.8,0) node{$\circ$};
\draw[->] (0,-0.2)--(0,-0.8);
   \draw[dashed] (-1,-1)--(1,-1);
\draw (-1.2,-1) node{$\bullet$};
\draw (-1.5,-1) node{$\bullet$} node[below] {$s'$};
\draw (-1.8,-1) node{$\circ$};
\draw (1.2,-1) node{$\bullet$};
\draw (1.5,-1) node{$\bullet$} node[below] {$s$};
\draw (1.8,-1) node{$\circ$};
\draw[->] (0,-1.2)--(0,-1.8);
   \draw[dashed] (-1,-2.45)--(1,-2.45);
\draw (-1.2,-2.45) node{$\bullet$};
\draw (-1.5,-2.45) node{$\circ$} node[below] {$s'$};
\draw (-1.8,-2.45) node{$\circ$};
\draw (1.2,-2.45) node{$\bullet$};
\draw (1.5,-2.45) node{$\bullet$};
\draw (1.8,-2.45) node{$\circ$};
\draw [decorate,decoration={brace}] (-1.65,0.15)--(1.35,0.15) node[midway,above]{$I$};
\draw [decorate,decoration={brace}] (-1.35,-2.3)--(1.65,-2.3) node[midway,above]{$I$};
  \end{tikzpicture}
  \caption{A move towards the right of an interval $I$ of zeroes 
  for a one-dimensional supercritical unrooted update family. 
  Zeroes are represented by $\bullet$ and ones by $\circ$.}
  \label{figure_intervalle}
 \end{center}
\end{figure}

For $d=2$ the mechanism is similar, but requires a more complex construction. In section 5 of 
\cite{Bollobas_et_al2015} (see in particular figure 5 and lemma 5.5 therein), 
it is proven that if $\mathcal{U}$ is an update family with a semicircle of 
unstable directions centered on direction $u$, it is possible to construct a ``droplet'': a finite set of zeroes that 
even if all other sites are at 1, allows us to put more sites at zero in 
direction $u$ with legal moves, creating a bigger droplet of the same shape, as 
illustrated on part (a) of figure \ref{figure_bootstrap2d}. 
It is the shape of the part of the droplet towards direction $u$ 
that enables its growth towards this direction. 
If $\mathcal{U}$ is supercritical unrooted, its stable directions 
are contained in a hyperplane of $\mathds{R}^2$, which means a straight line, 
hence there are at most two stable directions, and they must then be opposite. 
Therefore, there exists two opposite semicircles containing no stable direction, with middles $u$ and $-u$. 
We can use the construction of \cite{Bollobas_et_al2015} to build two droplets, corresponding to the 
two semicircles, that can grow respectively in the directions $u$ and $-u$ 
(see part (b) of figure \ref{figure_bootstrap2d}). Using these two droplets, we can get a combined droplet 
 that can grow in both directions $u$ and $-u$ (part (c) of figure \ref{figure_bootstrap2d}). Moreover, since 
 our rules allow any change of site state to be reversed, 
the droplet will also be able to shrink in these directions. Therefore, by having the droplet 
grow in direction $u$ and shrink in direction $-u$, we can make it 
move towards direction $u$ (see part (d) of figure \ref{figure_bootstrap2d}). 
This allows us to bring it to the origin using a bounded number of zeroes as we did with the interval we had 
for $d=1$.

\begin{figure}
 \parbox{0.3\textwidth}{
  \begin{center}
 \begin{tikzpicture}[scale=0.8]
  \draw (0,0)--(0,5)--(2,5)--(4,4)--(3,1)--(2,0)--cycle ;
  \draw [dashed] (2,5)--(3,5)--(5,4)--(4,1)--(3,0)--(2,0) ;
  \draw [->] (1,2.5)--(2.5,2.5) node [midway,above] {$u$};
 \end{tikzpicture}
 
 (a)
 \end{center}
 }
 \parbox{0.3\textwidth}{
  \begin{center}
 \begin{tikzpicture}[scale=0.4]
  \draw (0,0)--(0,5)--(2,5)--(4,4)--(3,1)--(2,0)--cycle ;
  \draw [->] (1,2.5)--(2.5,2.5) node [midway,above] {$u$};
 \end{tikzpicture}
 \begin{tikzpicture}[scale=0.4]
  \draw (0,0)--(0,5)--(-2,5)--(-4,4)--(-5,3)--(-5,2)--(-3,0)--cycle;
  \draw [->] (-1.5,2.5)--(-3,2.5) node [midway,above] {$-u$};
 \end{tikzpicture}
 
 (b)
 
 \medskip
 
 \begin{tikzpicture}[scale=0.3]
  \draw (2,5)--(4,4)--(3,1)--(2,0)--(-3,0)--(-5,2)--(-5,3)--(-4,4)--(-2,5)--cycle ;
  \draw [dashed] (2,5)--(4,5)--(6,4)--(5,1)--(4,0)--(2,0) ;
   \draw [dashed] (-2,5)--(-4,5)--(-6,4)--(-7,3)--(-7,2)--(-5,0)--(-3,0) ;
   \draw [->] (2,2.5)--(3,2.5) node [midway,above] {$u$};
   \draw [->] (-3,2.5)--(-4,2.5) node [midway,above] {$-u$};
 \end{tikzpicture}
 
 (c)
 \end{center}
 }
 \parbox{0.3\textwidth}{
  \begin{center}
 \begin{tikzpicture}[scale=0.2]
  \draw (2,5)--(4,4)--(3,1)--(2,0)--(-3,0)--(-5,2)--(-5,3)--(-4,4)--(-2,5)--cycle ;
  \draw (0,-1.5) node{$\downarrow$} ;
  \draw [dashed] (2,-3)--(4,-4)--(3,-7)--(2,-8) ; 
  \draw (2,-8)--(-3,-8)--(-5,-6)--(-5,-5)--(-4,-4)--(-2,-3)--(2,-3) ;
  \draw (2,-3)--(4,-3)--(6,-4)--(5,-7)--(4,-8)--(2,-8) ;
  \draw (0,-9.5) node{$\downarrow$} ;
  \draw (2,-11)--(4,-11)--(6,-12)--(5,-15)--(4,-16)--(-1,-16)--(-3,-14)--(-3,-13)--(-2,-12)--(-0,-11)--cycle ;
  \draw [dashed] (-1,-16)--(-3,-16)--(-5,-14)--(-5,-13)--(-4,-12)--(-2,-11)--(0,-11) ;
 \end{tikzpicture}

 (d)
 \end{center}
 }
 \caption{The construction of a droplet of zeroes for a two-dimensional supercritical unrooted update family 
 that can move towards $u$ and $-u$. (a) The shape delimited by the solid line is the droplet of \protect\cite{Bollobas_et_al2015}, 
 that can grow to the shape delimited by the dashed line. (b) The droplets corresponding to the 
 semicircles centered at $u$ and $-u$. (c) The combined droplet. (d) A move of the combined droplet to the right.}
 \label{figure_bootstrap2d}
\end{figure}
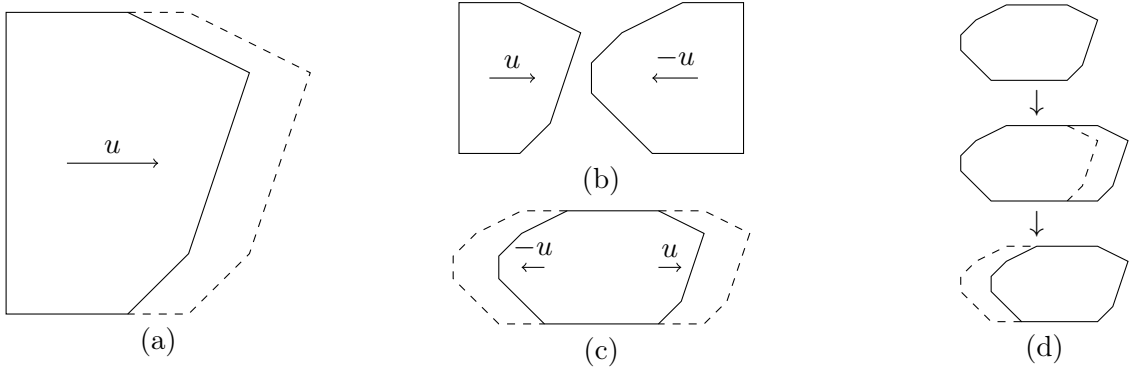

For $d \geq 3$, we expect a similar phenomenon to occur, but we cannot prove it because an equivalent 
of the construction of \cite{Bollobas_et_al2015} is not available yet.

\section*{Appendix: proof of lemma \ref{lemme_base}}

By assumption, the update family $\mathcal{U}$ is not supercritical unrooted, hence its 
stable directions are not contained in any hyperplane of $\mathds{R}^d$. Therefore, 
there exists stable directions $u_1,\dots,u_d$ of $\mathcal{U}$ that form a basis of $\mathds{R}^d$.
For any $u \in S^{d-1}$, we denote $\mathcal{H}_u$ the hyperplane orthogonal to $u$:
$\mathcal{H}_u = \{x \in \mathds{R}^d \,|\, \langle x,u \rangle = 0\}$.
Then, for any $i \in \{1,\dots,d\}$, $\bigcap_{j \neq i} \mathcal{H}_{u_j}$ is a straight line.\footnote{Indeed, 
$\bigcap_{j \neq i} \mathcal{H}_{u_j}$ is the intersection of $d-1$ hyperplanes in $\mathds{R}^d$, 
hence it contains a straight line. Furthermore, $\bigcap_{j \neq i} \mathcal{H}_{u_j}$ is orthogonal 
to the $u_j$, $j \neq i$, and since $\{u_1,\dots,u_d\}$ is a basis of $\mathds{R}^d$, 
$\{u_j : j \neq i\}$ generate a vector space of dimension $d-1$. Therefore $\bigcap_{j \neq i} \mathcal{H}_{u_j}$ 
is orthogonal to a vector space of dimension $d-1$. Consequently, it is at most a straight line.}
For any $i \in \{1,\dots,d\}$, we define $v_i$ as a unitary vector 
 in $\bigcap_{j \neq i} \mathcal{H}_{u_j}$. 

We are going to show that $\{v_1,\dots,v_d\}$ is a basis of $\mathds{R}^d$. 
For any set of vectors $\{w_1,\dots,w_m\} \subset \mathds{R}^d$, we denote $\mathrm{Vect}\{w_1,\dots,w_m\}$ the 
vector space generated by $\{w_1,\dots,w_m\}$. It is enough to prove that 
$\mathrm{Vect}\{v_1,\dots,v_d\}=\mathds{R}^d$. In order to do that, we take $v \in \mathds{R}^d$ 
a vector orthogonal to $\mathrm{Vect}\{v_1,\dots,v_d\}$. We are going to show that 
$v$ must be the null vector. For all $i \in \{1,\dots,d\}$, 
$v$ is orthogonal to $v_i$. Moreover, the vector space orthogonal to $v_i$ has dimension $d-1$. 
Furthermore, $v_i \in \bigcap_{j \neq i} \mathcal{H}_{u_j}$, hence the $u_j$, $j \neq i$ are orthogonal to 
$v_i$. Hence, as the $u_j$, $j \neq i$ are $d-1$ linearly independent vectors, the 
vector space orthogonal to $v_i$ is $\mathrm{Vect}\{u_1,\dots,u_{i-1},u_{i+1},\dots,u_d\}$. 
This implies that $v$ belongs to $\mathrm{Vect}\{u_1,\dots,u_{i-1},u_{i+1},\dots,u_d\}$, for any $i \in \{1,\dots,d\}$.
As $\{u_1,\dots,u_d\}$ is a basis of $\mathds{R}^d$, this yields $v=0$. Consequently, the 
vector space orthogonal to $\mathrm{Vect}\{v_1,\dots,v_d\}$ is reduced to $\{0\}$. We deduce 
$\mathrm{Vect}\{v_1,\dots,v_d\} = \mathds{R}^d$, thus $\{v_1,\dots,v_d\}$ is a basis of $\mathds{R}^d$.

We want a basis such that for any $i \in \{1,\dots,d\}$, $\mathds{H}_{u_i} = 
\{(x_1,\dots,x_d) \in \mathds{R}^d \,|\, x_i > 0\}$. In $\{v_1,\dots,v_d\}$, 
$\{(x_1,\dots,x_d) \in \mathds{R}^d \,|\, x_i = 0\}$ is generated by the vectors $v_1,\dots,v_{i-1},v_{i+1},\dots,v_d$, 
which are $d-1$ linearly independent vectors 
belonging to the hyperplane $\mathcal{H}_{u_i}$ of $\mathds{R}^d$, hence they generate $\mathcal{H}_{u_i}$. 
This implies $\mathcal{H}_{u_i} = 
\{(x_1,\dots,x_d) \in \mathds{R}^d \,|\, x_i = 0\}$. Therefore, $\mathds{H}_{u_i}$ is 
either $\{(x_1,\dots,x_d) \in \mathds{R}^d \,|\, x_i > 0\}$ or 
$\{(x_1,\dots,x_d) \in \mathds{R}^d \,|\, x_i < 0\}$. If $\mathds{H}_{u_i} = 
\{(x_1,\dots,x_d) \in \mathds{R}^d \,|\, x_i < 0\}$, we replace $v_i$ with $-v_i$. Thus we get $\mathds{H}_{u_i} = 
\{(x_1,\dots,x_d) \in \mathds{R}^d \,|\, x_i > 0\}$.

This method allows us to obtain a basis $\{v_1,\dots,v_d\}$ satisfying 
that for any $i \in \{1,\dots,d\}$, $\|v_i\|_2 = 1$ and $\mathds{H}_{u_i} = 
\{(x_1,\dots,x_d) \in \mathds{R}^d \,|\, x_i > 0\}$.

\section*{Acknowledgments}

I would like to thank my PhD advisor Cristina Toninelli for introducing me to the subject and 
helping me along this work. I also would like to thank Robert Morris 
for suggesting that the result could extend beyond 
dimension 2, as well as Fabio Martinelli and Ivailo Hartarsky for their helpful suggestions about the 
presentation of this work.

\end{document}